\numberwithin{equation}{section}
\newtheorem{prop}{Proposition}
\newcommand{\komment}[1]{}
 \journalname{}
\begin{document}
	\title{An Immuno-epidemiological Model Linking Between-host and Within-host Dynamics of Cholera%
	\thanks{This research is supported by a grant from the German Academic Exchange Service (DAAD) and by the TUM International Graduate School of Science and Engineering (IGSSE), 
		within the project GENOMIE\_QADOP.}
}


\author{Beryl Musundi       
}


\institute{Beryl Musundi \at
	Faculty of Mathematics, Technische Universit\"at M\"unchen, 85748 Garching, Germany \\
	Department of Mathematics, Moi university, 3900-30100 Eldoret, Kenya\\
	\email{beryl.musundi@tum.de}           
}

\date{Received: date / Accepted: date}

\maketitle

\begin{abstract}
Cholera, a severe gastrointestinal infection caused by the bacterium \textit{Vibrio cholerae}, remains a major threat to public health with a yearly estimated global burden of 2.9 million cases. Although the majority of existing models for the disease focus on its population dynamics, it's important to link the multiple scales of the disease to gain better perspectives on its spread and control. In this study, we formulate an immuno-epidemiological model for cholera linking the between-host and within-host dynamics of the disease. The within-host model utilizes time-scale methods to differentiate the pathogen dynamics from the dynamics of the immune response. Bifurcation analysis of the within-host system reveals the necessary conditions for the existence of both the Hopf and saddle-node bifurcations. Contrary to other within-host models, the current approach allows for the elimination of the pathogen after a finite time. The epidemic model takes into account the direct human-to-human transmission route of the infection as well as the transmission via the environment. It is represented by a dynamical system structured on the immune status, which is a function derived from the within-host immune response. The basic reproduction number is derived and the stability of equilibrium points analysed. Analysis of the endemic equilibrium reveals additional constraints that lead to its stability. Without loss of immunity, the endemic equilibrium, if it exists, is globally asymptotically stable. 
	\keywords{Cholera \and Within-host dynamics \and   Between-host dynamics \and Time-scale analysis \and Stability}
	 \subclass{MSC 92D30 \and MSC 34D15 \and MSC 35Q92}
\end{abstract}

\section{Introduction}
Infectious diseases remain a major cause of human mortality and morbidity despite the advances in medicine \citep{garira2017complete}. A holistic understanding of the transmission dynamics of these diseases is necessary for the development of better approaches aimed at reducing their transmission \citep{hethcote2000mathematics}. Two scales of interactions occur when a host comes into contact with a pathogen. These scales are characterised by the epidemiological process that is linked to disease transmission in the population and the immunological process that relates the viral-cell interaction at the individual host level \citep{feng2012model}. Two modelling approaches have been associated with these processes. The between host approach whose main focus is on the disease dynamics in the population and the within-host approach that looks at the disease from an individual host level \citep{wang2017disease}. The two approaches are frequently used independently as seen in \citet{shuai2011global, wang2017modeling}. However, models with multiple scales that link the between-host and within-host processes provide new perspectives in the host-parasite interactions. Such models, which have gained interest in recent times, are referred to as immuno-epidemiological models \citep{martcheva2015coupling}. These type of models explain the role of the within-host processes in pathogen evolution as well as make predictions of epidemiological quantities such as the reproduction number and disease prevalence \citep{martcheva2015coupling}. The explicit linkage between the two scales is one of the important aspects of setting up multi-scale models. The prominent linking mechanism for within-host models to between-host models is the pathogen load and the pathogen growth rate while the majority of between-host models are linked to within-host models through the transmission rate \citep{childs2019linked}. \citet{feng2013mathematical} links the within-host dynamics to the between-host dynamics of \textit{Toxiplasmi gondii}, an environmentally transmitted disease, through the pathogen load in the environment.\\ The present paper focuses on an immuno-epidemiological model for cholera, an acute gastrointestinal disease caused by the bacterium \textit{Vibrio cholerae}. This disease continues to affect millions of people in countries that lack access to safe water and proper sanitation infrastructure with the global burden estimated at 2.9 million cases and 95,000 deaths \citep{ali2015updated}. Sub-Saharan Africa bears the greatest burden of this disease. Cholera is transmitted directly through human to human contact and indirectly from the environment through contaminated food and water \citep{hartley2005hyperinfectivity}. The dynamics of the disease are therefore largely dependant on the diverse interactions between the environment, the human host and the pathogen \citep{hartley2005hyperinfectivity}. When the bacteria are ingested, they must survive the stomach's gastric acid. They then penetrate the mucus lining of the epithelial cells, colonize them and secrete a Cholera Toxin (CT) that causes the onset of cholera symptoms \citep{reidl2002vibrio}. These symptoms include watery diarrhoea and vomiting. Infected persons are either symptomatic or asymptomatic and can shed the bacteria back to the environment through their stool. Studies have shown that the freshly shed vibrios are more infectious in comparison to environmental vibrios. They are also responsible for the explosive nature of the disease \citep{hartley2005hyperinfectivity}. It is therefore essential to incorporate the within-host dynamics in the epidemic modelling of the disease.\\ 
The bulk of the developed cholera models centre on the epidemic spread of the disease \citep{hartley2005hyperinfectivity,mukandavire2011estimating, shuai2011global, tian2011global, brauer2013dynamics}. A within-host model based on the bacterial-viral interaction of the disease \citep{wang2017modeling} is among the few attempts made at modelling the disease at the within-host level.\\ 
Recent attempts have also been made in the development and analysis of multi-scale cholera models.
A multi-scale model that links the between-host and within-host dynamics of cholera through the concentration of human vibrios is formulated in \citet{wang2017disease}. The between-host dynamics are represented by a SIRS model. An additional environmental compartment outlining the bacterial evolution in the environment is also set up. The dynamics of the within-host model, which describes the growth of human vibrios inside the body are, however, represented very simply by the use of a single ordinary differential equation. Furthermore, the interaction of the pathogen with the immune system is not taken into account. \\
\citet{ratchford2019modeling} subdivide the dynamics of cholera into three subsystems that show the different time scales involved in the growth of a cholera infection. The subsystems represent the within-host, between-host and environmental dynamics. The within-host system models the interaction of the immune system with human vibrios and viruses. The environmental growth of the vibrios provides the linkage of the within-host to the between-host system. The within-host immune response is not considered as a variable in the epidemic model of the disease, consequently, neglecting the effects of immunity.\\
In this paper, we aim to extend the knowledge of multi-scale modelling of cholera by formulating an immuno-epidemiological model that couples the within-host and between-host dynamics of cholera. The within-host dynamics which describe the interaction between healthy cells, the pathogen and immune response are represented by a system of ordinary differential equations. The pathogen is considered to undergo some Allee effects and its dynamics are distinguished from the dynamics of the immune response through the use of time scales. The between-host model describes the spread of the disease in the population and is represented by a size-structured model. In our approach, the immune status, which is a function of the within-host immune response, is considered to be the physiological variable that structures the infected population. Both the environmental and human transmission pathways of the disease are taken into account with the within-host pathogen load providing an additional link between the two systems. We analyse our model and verify the validity of the results through numerical simulations.\\
The remainder of this paper is organised as follows. In section \ref{sec:2} we formulate the within-host model, check for boundedness of solutions and carry out a time-scale analysis of the two subsystems. Furthermore, we perform a bifurcation analysis of the system and simulate the results numerically. In section \ref{sec:3} we formulate the between-host model, compute the equilibrium points, derive the expression for the reproduction number and analyse the stability of the steady states. Finally, we discuss the results and conclude the paper in section \ref{sec:4}.
\section{Within-host model}\label{sec:2}
\subsection{Model Formulation}
The model is a modified version of the within-host models with immune response reviewed in \citet{martcheva2015coupling} that describe the interaction between the pathogens and the immune system. In this case, the within-host model describes the interaction between the target cells, cholera pathogens and the immune response. 
\begin{eqnarray}\label{1}
\frac{dT}{dt} &=& \Lambda - \mu T -\alpha P^2T \nonumber \\
\frac{dP}{dt} &=& \alpha P^2T - \gamma P-\delta P W \\
\frac{dW}{dt} &=& \varepsilon (\kappa P-c W)\nonumber.
\end{eqnarray}
The variables T, P and W represent
the density of target cells, the pathogen load and the immune response respectively. We take the incidence to be quadratic in P, to model the Allee effect. The Allee effect, derived from the work by \citet{allee1927animal}, defines a positive correlation between the population density and population growth rate of some species. Populations with this effect show reduced growth rates at low densities \citep{drake2011allee}. Microbial populations with quorum sensing mechanisms such as \textit{Vibrio fischeri} and \textit{Vibrio cholerae} may exhibit this effect \citep{kaul2016experimental,jemielita2018quorum}. The parameter $\Lambda$  denotes the rate of production of healthy cells, $\mu$ and $\gamma$ denote the natural death rate of healthy cells and the cholera pathogens respectively. $\alpha$ is the rate of infection of healthy cells, $\delta$  is the rate of clearance of the pathogen by the immune response and $\varepsilon\ll 1$ the slow time scale of 
the immune response ($\varepsilon$ is used to distinguish the dynamics of the immune response from the pathogen dynamics).  $\kappa $  denotes the rate of activation of the immune response in the presence of the pathogen and $c$ is the self-deactivation of the immune response.
\subsection{Positivity and Boundedness}
We show that the model is well-posed by showing that the solutions are positive and bounded.
\begin{prop}
	Let all the parameters of system (\ref{1}) be non-negative. A non-negative solution (T(t), P(t), W(t)) exists for all state variables with
	non-negative initial conditions $(T(0)\geq 0, P(0) \geq 0, W(0)\geq 0)$ for all $t\geq0$.
\end{prop}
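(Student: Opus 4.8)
The plan is to combine a standard local existence result with an invariance check on the non-negative octant, and then to upgrade local existence to global existence by means of an a priori bound. First I would note that the right-hand side of system (\ref{1}) is polynomial in $(T,P,W)$, hence $C^1$ and locally Lipschitz on $\mathbb{R}^3$; by the Picard--Lindel\"of theorem there is a unique solution on a maximal interval $[0,t_{\max})$ for every choice of initial data, so it suffices to show that this solution stays non-negative and does not blow up in finite time.

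For non-negativity I would rewrite each equation so that the corresponding coordinate plane acts as a barrier. The $P$-equation reads $\dot P = P(\alpha P T - \gamma - \delta W)$, so $P(t) = P(0)\exp\left(\int_0^t(\alpha P(s)T(s) - \gamma - \delta W(s))\,ds\right)$, which has the same sign as $P(0)$ and hence is $\geq 0$ (and vanishes identically when $P(0)=0$). Likewise, since $\mu + \alpha P^2 \geq 0$ and $\Lambda \geq 0$, the equation $\dot T = \Lambda - (\mu + \alpha P^2)T$ gives $T(t) = e^{-\int_0^t(\mu+\alpha P(s)^2)\,ds}\left(T(0) + \int_0^t \Lambda\, e^{\int_0^s(\mu+\alpha P(r)^2)\,dr}\,ds\right) \geq 0$; and once $P\geq 0$ is in hand, $\dot W = \varepsilon(\kappa P - cW)$ gives $W(t) = e^{-\varepsilon c t}\left(W(0) + \varepsilon\kappa\int_0^t e^{\varepsilon c s}P(s)\,ds\right)\geq 0$. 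Hence the closed non-negative octant is positively invariant.

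It then remains to show $t_{\max} = \infty$, i.e.\ to rule out finite-time blow-up. Comparison with $\dot T \leq \Lambda - \mu T$ gives $T(t)\leq\max\{T(0),\Lambda/\mu\}$. The quadratic term $\alpha P^2 T$ is the only candidate for blow-up, but it cancels in the sum $N := T+P$: using $P,W\geq 0$ one has $\dot N = \Lambda - \mu T - \gamma P - \delta PW \leq \Lambda - mN$ with $m := \min\{\mu,\gamma\}$, so $N$, and therefore $P$, stays bounded; then $\dot W \leq \varepsilon(\kappa P_{\max} - cW)$ bounds $W$ as well. Thus the solution remains in a fixed compact set, cannot escape in finite time, and so exists and is non-negative for all $t\geq 0$. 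I expect the only real content of the argument to be this cancellation in $T+P$, which is what tames the Allee (quadratic) nonlinearity; the positivity step is routine, the one mild subtlety being that non-negativity of $W$ must be deduced after, and using, non-negativity of $P$.
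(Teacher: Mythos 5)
Your proof is correct, and it is worth noting that the paper itself never actually proves this proposition: it is stated without proof, and the only argument supplied in that subsection is for the subsequent theorem on the absorbing set $\Omega$. So you are supplying content the paper omits. Your positivity step (integral representations showing each coordinate plane is a barrier, with the correct ordering: $T$ and $P$ first, then $W$ using $P\geq 0$) is the standard and right way to do it. Your global-existence step is essentially the same device the paper uses later for boundedness — the quadratic term $\alpha P^2 T$ cancels in $N=T+P$, giving $\dot N \leq \Lambda - \min\{\mu,\gamma\}N$ — so your argument and the paper's absorbing-set proof share their key cancellation; you simply deploy it earlier, where it is logically needed to rule out finite-time blow-up before one can speak of solutions for all $t\geq 0$. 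One pedantic remark: since the hypothesis only asks that parameters be non-negative, the bound $T(t)\leq\max\{T(0),\Lambda/\mu\}$ degenerates when $\mu=0$, and likewise $m=\min\{\mu,\gamma\}$ may vanish; but in that case $\dot N\leq \Lambda$ still gives at most linear growth, which suffices to exclude blow-up, so the conclusion stands.
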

\begin{theorem}
	The set $\Omega = \{(T,P, W)$
	$\in$ $\mathbb{R}_+^3 ; T+P \leq\frac{\Lambda}{\mu+\gamma}+1, W  \leq\frac{\kappa \Lambda}{c(\mu+\gamma)}+ 1  \}$ is an absorbing set for the system (\ref{1}).
\end{theorem}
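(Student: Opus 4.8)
The plan is to reduce the three-dimensional problem to two scalar comparison arguments: first bound the fast pair $(T,P)$ through the aggregate $N=T+P$, then feed the resulting bound on $P$ into the slow $W$-equation. Throughout I use the positivity already guaranteed by the preceding proposition, so the trajectory stays in $\mathbb{R}_+^3$ and every removal term is nonnegative. Adding the first two equations of (\ref{1}), the infection terms $\pm\alpha P^2T$ cancel and leave
\[
\frac{dN}{dt} = \Lambda - \mu T - \gamma P - \delta P W .
\]

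The decisive step is to turn the right-hand side into a bona fide linear sink in $N$ with the precise decay rate $\mu+\gamma$, i.e. to show that the aggregate clearance $\mu T+\gamma P+\delta PW$ controls $(\mu+\gamma)N$ so that
\[
\frac{dN}{dt} \le \Lambda - (\mu+\gamma)\,N .
\]
This is exactly the inequality that fixes the threshold at $\Lambda/(\mu+\gamma)$, and establishing it is where I expect the real work to be; I return to the difficulty below. Granting it, the remainder is a routine comparison/Gr\"onwall estimate: solving the associated scalar linear equation gives
\[
N(t) \le \frac{\Lambda}{\mu+\gamma} + \Big(N(0)-\frac{\Lambda}{\mu+\gamma}\Big)e^{-(\mu+\gamma)t},
\]
so that $\limsup_{t\to\infty}N(t)\le \Lambda/(\mu+\gamma)$ and in particular $N(t)<\tfrac{\Lambda}{\mu+\gamma}+1$ after a finite time. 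The additive $1$ serves precisely to promote the asymptotic $\limsup$ bound to a strict inequality on a compact set that the flow cannot later recross. Since $0\le P\le N$, this simultaneously yields $\limsup_{t\to\infty}P(t)\le \Lambda/(\mu+\gamma)$.

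For the last coordinate I would read $\dot W=\varepsilon(\kappa P-cW)$ as a scalar linear system driven by the now-bounded input $P$. Using the $P$-bound, for every $\eta>0$ one eventually has $\dot W\le \varepsilon\kappa\big(\tfrac{\Lambda}{\mu+\gamma}+\eta\big)-\varepsilon c\,W$, and the same comparison principle gives $\limsup_{t\to\infty}W(t)\le \tfrac{\kappa}{c}\big(\tfrac{\Lambda}{\mu+\gamma}+\eta\big)$; letting $\eta\downarrow 0$ and attaching the unit cushion places $W$ below $\tfrac{\kappa\Lambda}{c(\mu+\gamma)}+1$ after a finite time. Combining the two finite-time entrance statements shows every trajectory enters $\Omega$, and because both defining inequalities are strict on $\partial\Omega$ with the vector field pointing inward, it then remains there; hence $\Omega$ is absorbing.

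The one genuinely delicate point is the second displayed inequality. The immediate estimate $\mu T+\gamma P\ge \min\{\mu,\gamma\}\,N$ only produces the coarser threshold $\Lambda/\min\{\mu,\gamma\}$, so squeezing out the sharper effective rate $\mu+\gamma$ is the crux of the argument: it requires accounting carefully for the clearance contributions (including the $\delta PW$ term) rather than discarding them, and this is the step I would scrutinize most closely before committing to the stated constants.
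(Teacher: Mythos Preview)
Your strategy is identical to the paper's: set $N=T+P$, derive a linear differential inequality for $N$, apply Gr\"onwall, and then feed the resulting bound into the $W$-equation. The paper carries this out in exactly the three lines you wrote, passing from
\[
\frac{dN}{dt}=\Lambda-\mu T-\gamma P-\delta PW
\qquad\text{to}\qquad
\frac{dN}{dt}\le \Lambda-(\mu+\gamma)N
\]
with no further comment, and then invokes ``similar arguments'' for $W$.

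You were right to flag this passage as the delicate point; it is not merely delicate but in fact false as written. The claimed inequality is equivalent to $\gamma T+\mu P\le \delta PW$, which fails whenever $W$ is small (take $W=0$, $T,P>0$). The immune-clearance term $\delta PW$ cannot be counted on to supply the missing removal, so the comparison argument only yields the coarser estimate you anticipated,
\[
\frac{dN}{dt}\le \Lambda-\min\{\mu,\gamma\}\,N,
\qquad
\limsup_{t\to\infty}N(t)\le \frac{\Lambda}{\min\{\mu,\gamma\}},
\]
and indeed the stated constant $\Lambda/(\mu+\gamma)$ is too small in general: already at the pathogen-free equilibrium $T=\Lambda/\mu$, $P=0$ one has $N=\Lambda/\mu$, which exceeds $\Lambda/(\mu+\gamma)+1$ whenever $\Lambda\gamma>\mu(\mu+\gamma)$. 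So the gap you identified is real, the paper's proof glosses over it, and the sharp rate $\mu+\gamma$ cannot be recovered by accounting for $\delta PW$; the theorem as stated needs the constant corrected to $\Lambda/\min\{\mu,\gamma\}$ (with the corresponding change for $W$), after which your outline goes through verbatim.
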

\begin{proof}
	Taking the total population $N=T+P$
	\begin{eqnarray*}
		\frac{dN}{dt} &=& \Lambda - \mu T -\alpha P^2T+\alpha P^2T - \gamma P-\delta P W\\
		\frac{dN}{dt} &\leq& \Lambda -(\mu+\gamma) N.
	\end{eqnarray*}
	Solving using the integrating factor and applying initial conditions gives 
	\begin{equation*}
	N(t) \leq \frac{\Lambda}{\mu+\gamma}+ \bigg [N(0)-\frac{\Lambda}{\mu+\gamma}\bigg] e^{-(\mu +\gamma)t}	,
	\end{equation*}
	as $t\rightarrow \infty$ $\lim_{t\to\infty} N(t)\leq\frac{\Lambda}{\mu+\gamma}$
	similarly, as $t \rightarrow 0$ $\lim_{t\to 0}N(t)\leq N(0)$.\\
	This implies that N is eventually bounded by 
	$\frac{\Lambda}{\mu+\gamma}+1$. 
	Using similar arguments, we can also show that W is bounded.\\
	Since the solutions to system (\ref{1}) are positive and bounded, the model is biologically meaningful.\qed
\end{proof}
\subsection{Time-scale Analysis}
The dynamics of the pathogen and target cells are considered to take place at a faster time scale in comparison to the immune response. We, therefore, use time scale analysis to analyse system (\ref{1}).
\subsubsection{Fast System}
For $\epsilon \rightarrow 0$. The fast system is given by $\frac{dW}{dt} =0$ and
\begin{eqnarray}\label{2}
\frac{dT}{dt} &=& \Lambda - \mu T -\alpha P^2T \nonumber\\
\frac{dP}{dt} &=& \alpha P^2T - \gamma P-\delta P W. 
\end{eqnarray}
\begin{prop}\label{a}
	The system (\ref{2}) has a trivial infection free stationary point $E_0=(\frac{\Lambda}{\mu},0)$ which is always locally asymptotically stable and additionally for $\Lambda > 2(\gamma + \delta  W)\sqrt{\frac{\mu}{\alpha}}$  a non trivial stationary point $E_1=( T^*,P^*)$ given by 
	\begin{equation*}
	T^*=  \frac{\Lambda}{\mu +\alpha P^{*2}}, \qquad P^*=\frac{1}{2(\gamma + \delta  W)\alpha}\big[\alpha \Lambda \pm\sqrt{\alpha^2 \Lambda^2-4\alpha \mu (\gamma + \delta  W)^2}\big].
	\end{equation*}   
\end{prop}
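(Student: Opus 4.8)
The plan is to locate the equilibria by elementary algebra and then read off the local stability of $E_0$ from the Jacobian. First I would set both right-hand sides of (\ref{2}) to zero. The second equation factors as $P\big(\alpha P T-\gamma-\delta W\big)=0$, so either $P=0$, in which case the first equation forces $T=\Lambda/\mu$ and we recover $E_0=(\Lambda/\mu,0)$; or $\alpha P T=\gamma+\delta W$, i.e. $T=(\gamma+\delta W)/(\alpha P)$. In the latter case the first equation gives $T=\Lambda/(\mu+\alpha P^2)$, and equating the two expressions for $T$ yields the quadratic $\alpha(\gamma+\delta W)P^2-\alpha\Lambda P+\mu(\gamma+\delta W)=0$. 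Solving for $P$ produces exactly the stated $P^*$, and back-substitution gives $T^*=\Lambda/(\mu+\alpha P^{*2})$.

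Next I would record the existence condition. The quadratic has real roots iff its discriminant $\alpha^2\Lambda^2-4\alpha\mu(\gamma+\delta W)^2$ is nonnegative, which rearranges to $\Lambda\ge 2(\gamma+\delta W)\sqrt{\mu/\alpha}$; under the strict inequality stated in the proposition there are two distinct real roots, and positivity of both is immediate from Vieta's formulas, since their sum $\Lambda/(\gamma+\delta W)$ and product $\mu/\alpha$ are both positive. Hence $E_1=(T^*,P^*)$ is a genuine interior equilibrium (in fact a pair of them) precisely when the threshold is exceeded.

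For the stability of $E_0$ I would compute the Jacobian of (\ref{2}),
\[
J(T,P)=\begin{pmatrix} -\mu-\alpha P^2 & -2\alpha P T\\ \alpha P^2 & 2\alpha P T-\gamma-\delta W\end{pmatrix}.
\]
Evaluating at $E_0=(\Lambda/\mu,0)$ annihilates the lower-left and upper-right entries, leaving the triangular matrix with diagonal $(-\mu,\,-(\gamma+\delta W))$. Both eigenvalues are strictly negative for positive parameters, so $E_0$ is locally asymptotically stable, and — crucially — this holds unconditionally, independently of whether $E_1$ exists.

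There is no serious analytic obstacle here; the work is routine. The only points that deserve care are (i) checking that \emph{both} roots of the quadratic, not merely one, are positive, and (ii) stressing that, in contrast with models built on bilinear incidence, the infection-free state stays stable even past the threshold, so that the birth of $E_1$ produces bistability — the hallmark of the Allee effect built into the quadratic incidence and the natural starting point for the bifurcation analysis that follows. I would also note in passing that equality in the threshold collapses the two roots to the single double root $P^*=\Lambda/\big(2(\gamma+\delta W)\big)$, the saddle-node configuration examined later.
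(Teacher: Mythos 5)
Your proposal is correct and follows essentially the same route as the paper: set the right-hand sides of (\ref{2}) to zero, factor out $P$, solve the resulting quadratic for $P^*$, and read off the local stability of $E_0$ from the triangular Jacobian with diagonal $(-\mu,\,-(\gamma+\delta W))$. Your extra observations --- the Vieta check that both roots of the quadratic are positive and the remark that the unconditional stability of $E_0$ yields bistability once $E_1$ appears --- go slightly beyond what the paper records, but they are refinements of the same argument rather than a different one.
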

\begin{proof}
	To find the equilibrium points we set the right hand side of system (\ref{2}) to  zero
	\begin{eqnarray}\label{3}
	\Lambda - \mu T -\alpha P^2T &=& 0 \nonumber \\
	\alpha P^2T-\gamma P-\delta P W &=& 0.	
	\end{eqnarray}  
	At the trivial equilibrium point $P=0$ and $T= \frac{\Lambda}{\mu}$ thus $E_0=(\frac{\Lambda}{\mu},0)$.\\ 
	Linearizing system (\ref{2})  gives the Jacobian matrix 
	\begin{equation}
	J=
	\left[ {\begin{array}{cc}\label{4}
		-\mu -\alpha P^2 & -2\alpha P T \\
		\alpha P^2 & 2\alpha PT-\gamma -\delta W \\
		\end{array} } \right].
	\end{equation}
	At $E_0$ matrix (\ref{4}) is given by 
	\[J=
	\left[ {\begin{array}{cc}
		-\mu  & 0\\
		0 & -\gamma -\delta W \\
		\end{array} } \right]
	.\]
	Its characteristic equation has two negative roots and thus the stationary point is locally asymptotically stable.\\
	For the non-trivial equilibrium point, we solve for $T^*$ from the first equation of (\ref{3}) to get
	$T^*=  \frac{\Lambda}{\mu +\alpha P^{*2}}$.
	Substituting $T^*$ in the second equation gives
	\begin{equation*}
	P^*(	\alpha P^* (\frac{\Lambda}{\mu +\alpha P^{*2} })-\gamma- \delta W)=0.
	\end{equation*}
	Since $P^*\ne0$
	\begin{eqnarray}
	\alpha \Lambda P^*&=&(\gamma+ \delta W)(\mu +\alpha P^{*2}),\nonumber\\ 
	\alpha (\gamma +\delta W) P^{*2}-\alpha \Lambda P^* + (\gamma + \delta  W)\mu&=&0,\label{5}
	\end{eqnarray} 
	\begin{equation}\label{6}
	P^*=\frac{1}{2(\gamma + \delta  W)\alpha}\big[\alpha \Lambda \pm\sqrt{\alpha^2 \Lambda^2-4\alpha \mu (\gamma + \delta  W)^2}\big].
	\end{equation}
	$P^*$ exists whenever
	\begin{equation*}
	\alpha^2 \Lambda^2>4\alpha \mu (\gamma + \delta  W)^2 \qquad\implies\qquad
	\Lambda>2(\gamma + \delta  W)\sqrt{\frac{m}{\alpha}}. 
	\end{equation*}\qed
\end{proof}
\subsubsection*{Bifurcation Analysis}
\subsubsection*{Saddle-node Bifurcation}
A saddle-node bifurcation occurs when two stationary points of a dynamical system collide and annihilate each other. 
Proposition (\ref{a}) indicates that we have a saddle-node bifurcation whenever 
\begin{eqnarray*}
	\Lambda&=&2(\gamma + \delta  W)\sqrt{\frac{m}{\alpha}} .
\end{eqnarray*}
\subsubsection*{Hopf Bifurcation}
A Hopf bifurcation occurs when the system loses stability and periodic orbits appear. It is associated with the occurrence of purely imaginary eigen values \citep{kuznetsov2013elements}. 
\begin{prop}
	System (\ref{2}) has a Hopf point whenever
	\begin{eqnarray*}
		\mu &=& \Gamma-\frac{\Gamma^4}{\alpha \Lambda^2},\\
		\Gamma&>& \mu 
	\end{eqnarray*} where $\Gamma=\gamma + \delta  W$
\end{prop}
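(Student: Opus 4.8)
The plan is to use the standard planar criterion for a Hopf point: at a hyperbolic equilibrium the Jacobian must have a pair of purely imaginary eigenvalues, which for a $2\times2$ matrix is equivalent to $\operatorname{tr}(J)=0$ together with $\det(J)>0$ (the eigenvalues being then $\pm i\sqrt{\det(J)}$). I would therefore work with the Jacobian (\ref{4}) evaluated at the nontrivial equilibrium $E_1=(T^*,P^*)$ from Proposition (\ref{a}), simplify its entries using the stationarity equations (\ref{3}), and impose these two conditions. Writing $\Gamma=\gamma+\delta W$ throughout.

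First I would exploit the equilibrium relations. The second equation of (\ref{3}) gives $\alpha P^{*2}T^*=\Gamma P^*$, hence $\alpha P^*T^*=\Gamma$; this collapses the lower-right entry of (\ref{4}) to $2\alpha P^*T^*-\Gamma=\Gamma$ and the upper-right entry to $-2\Gamma$, so that $J(E_1)=\begin{bmatrix}-\mu-\alpha P^{*2} & -2\Gamma\\ \alpha P^{*2} & \Gamma\end{bmatrix}$. Then $\operatorname{tr}\!\big(J(E_1)\big)=\Gamma-\mu-\alpha P^{*2}$ and a one-line computation gives $\det\!\big(J(E_1)\big)=\Gamma(\alpha P^{*2}-\mu)$. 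The trace-zero condition reads $\alpha P^{*2}=\Gamma-\mu$, which already forces $\Gamma>\mu$.

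Next I would substitute $\alpha P^{*2}=\Gamma-\mu$ into the quadratic (\ref{5}), namely $\alpha\Gamma P^{*2}-\alpha\Lambda P^*+\Gamma\mu=0$; this becomes $\Gamma(\Gamma-\mu)-\alpha\Lambda P^*+\Gamma\mu=0$, i.e. $\alpha\Lambda P^*=\Gamma^2$, so $P^*=\Gamma^2/(\alpha\Lambda)$. Feeding this back into $\alpha P^{*2}=\Gamma-\mu$ yields $\Gamma^4/(\alpha\Lambda^2)=\Gamma-\mu$, that is $\mu=\Gamma-\Gamma^4/(\alpha\Lambda^2)$ — exactly the asserted relation, with $\Gamma>\mu$ as its companion inequality. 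I would also check that this $P^*$ is genuinely a root of (\ref{5}) inside the existence regime of Proposition (\ref{a}): substituting the derived relation into the discriminant $\alpha^2\Lambda^2-4\alpha\mu\Gamma^2$ reduces it to $\Gamma^6(\Gamma-2\mu)^2/\big((\Gamma-\mu)^2\Lambda^2\big)$, a perfect square, so existence holds automatically (with the degenerate equality case $\Gamma=2\mu$ corresponding to the saddle-node).

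The point I expect to require the most care is the positivity of the determinant, since $\operatorname{tr}(J)=0$ by itself only gives eigenvalues $\pm\sqrt{-\det(J)}$: here, after using the trace condition, $\det\!\big(J(E_1)\big)=\Gamma(\Gamma-2\mu)$, so the eigenvalues are $\pm i\sqrt{\Gamma(\Gamma-2\mu)}$ and are purely imaginary only when $\Gamma>2\mu$; thus the bare hypothesis $\Gamma>\mu$ should in fact be sharpened to $\Gamma>2\mu$ (or the determinant sign must be argued separately). Finally, to promote a \emph{Hopf point} to a genuine \emph{Hopf bifurcation} one should verify the transversality condition — picking a bifurcation parameter (e.g.\ $\mu$, $\Lambda$, or $W$ acting through $\Gamma$), expressing $\operatorname{tr}(J)$ as a function of it along the branch $E_1$, and checking that the real part of the eigenvalue pair crosses zero with nonzero derivative. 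I expect this last step to be a routine implicit-differentiation calculation once $\operatorname{tr}(J(E_1))$ is written in terms of the chosen parameter.
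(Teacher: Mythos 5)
Your proposal is correct and follows essentially the same route as the paper: simplify the Jacobian at $E_1$ using the stationarity relations, impose $\operatorname{tr}J=0$ to get $P^*=\Gamma^2/(\alpha\Lambda)$ and hence $\mu=\Gamma-\Gamma^4/(\alpha\Lambda^2)$, then require $\det J>0$. Your observation that the determinant condition actually forces $\Gamma>2\mu$ (not merely $\Gamma>\mu$) is exactly what the paper's own proof concludes in its final line, so you have correctly identified that the hypothesis as stated in the proposition is not the sharp one; your additional checks (the discriminant reducing to a perfect square, and the transversality condition needed to upgrade a Hopf point to a Hopf bifurcation) go beyond what the paper does but are sound and worthwhile.
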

\begin{proof}
	From the second equation in (\ref{3}) and equation 
	(\ref{5}) we get
	\begin{equation}\label{7}
	P^*T^* = \frac{\gamma + \delta  W}{\alpha}, \qquad
	\alpha P^{*2}= \frac{\alpha \Lambda P^*}{\gamma + \delta  W}-\mu.
	\end{equation}
	Substituting the two equations in matrix (\ref{4}) and simplifying gives	
	\begin{equation}\label{8}
	J=
	\left[ {\begin{array}{cc}
		-\frac{\alpha \Lambda P^*}{\gamma + \delta  W} & -2(\gamma + \delta  W) \\
		\frac{\alpha \Lambda P^*}{\gamma + \delta  W}-\mu& \gamma + \delta  W \\
		\end{array} } \right],
	\qquad
	J=\frac{\alpha \Lambda}{\gamma + \delta  W}
	\left[ {\begin{array}{cc}
		-P^* & \frac{-2(\gamma + \delta  W)^2}{\alpha \Lambda} \\
		P^*-\frac{\gamma + \delta  W}{\alpha \Lambda}& \frac{(\gamma + \delta  W)^2}{\alpha \Lambda}  \\
		\end{array} } \right].
	\end{equation}
	For the occurrence of a Hopf point the trace of matrix (\ref{8}) should be equal to zero which implies that
	\begin{equation*}
	P^*=\frac{(\gamma + \delta  W)^2}{\alpha \Lambda}.
	\end{equation*}
	Letting $\Gamma=\gamma + \delta  W $ we get	$P^*=\frac{\Gamma^2}{\alpha \Lambda}$ which is substituted in (\ref{5}) to give 
	\begin{equation}\label{9}
	\alpha \Gamma (\frac{\Gamma^2}{\alpha \Lambda})-\alpha \Lambda \frac{\Gamma^2}{\alpha \Lambda} + \mu \Gamma=0 \quad \implies
	\mu=\Gamma -\frac{\Gamma^4}{\alpha \Lambda^2}. 
	\end{equation}
	Substituting this value of $\mu$ in (\ref{6}) gives
	\begin{equation*}
	P^*=\frac{1}{2\Gamma \alpha}\big[\alpha \Lambda \pm\sqrt{\alpha^2 \Lambda^2-4\alpha(\Gamma -\frac{\Gamma^4}{\alpha \Lambda^2})  \Gamma^2}\big].
	\end{equation*}
	Thus
	\begin{equation*}
	P_1^* = \frac{\Gamma^2}{\alpha \Lambda},\qquad
	P_2^*=\frac{\Lambda}{ \Gamma}-\frac{\Gamma^2}{\alpha \Lambda}.
	\end{equation*}
	$P_1^*$ is the required value since it gives us a zero value when substituted in the trace.\\
	Further to the trace being zero the determinant of matrix (\ref{8}) should  also be positive, which implies that
	\begin{equation*}
	-\mu \alpha P^* T^* + \alpha^2 P^{*3} T>0 \qquad
	\implies \qquad	\alpha P^{*2} > \mu.
	\end{equation*}
	Substituting the values of $P_1^*$ and $\mu$ into the above equation gives us	$\Gamma> 2\mu$.\qed
\end{proof}
\subsubsection*{Numerical Simulations}
We perform numerical simulations using the XPPAUT software \citep{ermentrout2002simulating} to confirm the validity of our analytical results. We assume the initial conditions to be; $ T = 0.5$, $P = 0.9$, $\alpha = 1$, $\mu = 0.1$, $\Lambda = 1$, $\gamma=0.5$, $\delta = 0.3$, $W = 0.9$. We take $\delta$ and W to be the bifurcation parameters. In Fig (\ref{fig:f1}) a saddle node bifurcation occurs when $\delta=1.201$ and a Hopf birfucation occurs when $\delta=0.5158$. In Fig (\ref{fig:f2}) a Hopf bifurcation occurs when $W=1.547$. There's is also a possibility of the occurrence of a homoclinic bifurcation in Fig (\ref{fig:f1}) at the point where the periodic orbits collide with the saddle-node. 
\begin{figure}[h]
	\centering
	\subfloat[Saddle node (SN)  and Hopf (H) bifurcation]{\includegraphics[width=0.5\textwidth]{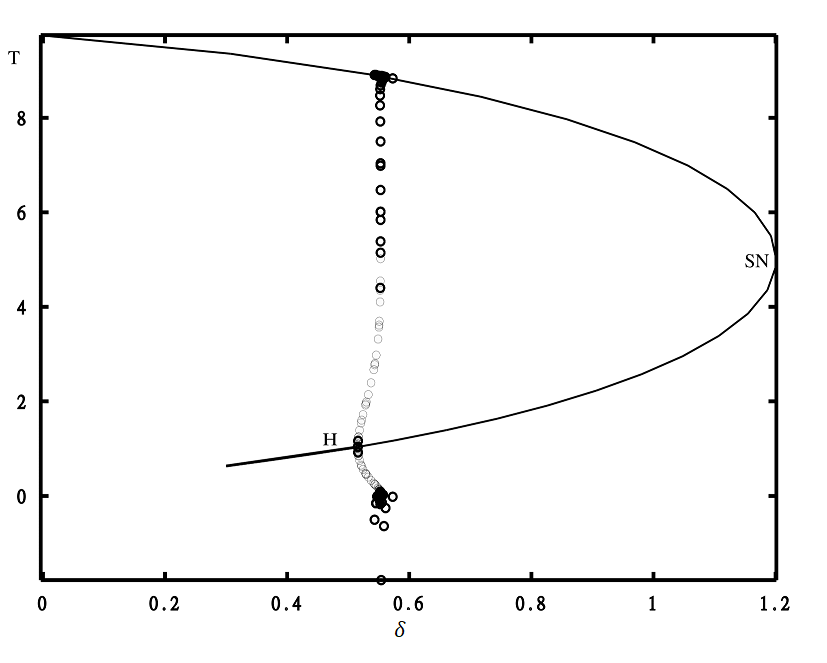}\label{fig:f1}}
	\hfil
	\subfloat[Hopf birfurcation]{\includegraphics[width=0.5\textwidth]{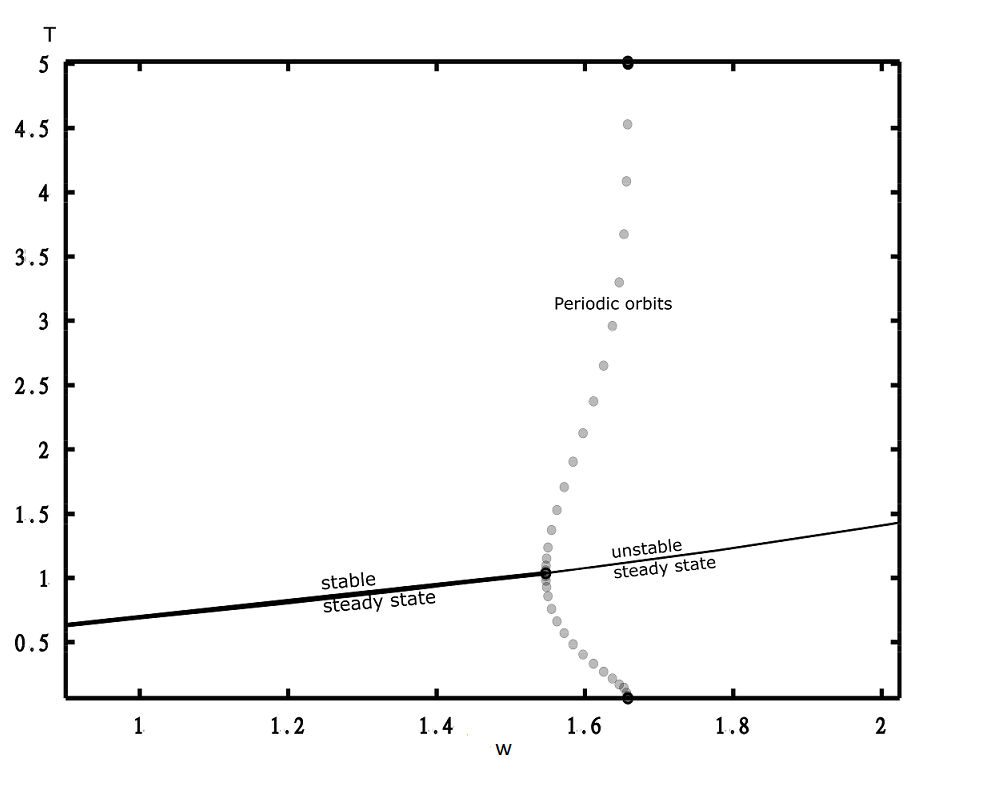}\label{fig:f2}}
	\caption{Bifurcation diagrams. Grey circles indicate stable periodic orbits while black circles indicate unstable orbits. }
\end{figure}
\subsubsection{Slow System}
For the slow system $\tau =\epsilon t$ and thus the slow system dynamics are given by 
\begin{eqnarray*}
	\epsilon \frac{dT}{d\tau} &=& \Lambda- \mu T -\alpha P^2T\\
	\epsilon \frac{dP}{d\tau} &=& \alpha P^2T - \gamma P-\delta P W\\
	\frac{dW}{d\tau} &=&\kappa P-c W.
\end{eqnarray*}
On the singular limit, the system reduces to
\begin{eqnarray}\label{10}
0 &=& \Lambda - \mu T -\alpha P^2T \nonumber\\
0 &=& \alpha P^2T - \gamma P-\delta P W \\
\frac{dW}{d\tau} &=&\kappa P-c W\nonumber.
\end{eqnarray}
We notice that the first two equations give us the slow manifold which consists of two branches that can be expressed as
$W=\phi(P)= \frac{-\gamma}{\delta}+\frac{\alpha \Lambda P}{\delta(\alpha P^2+\mu)}$   and  $P=0$. We plot this slow manifold in Figure \ref{fig:f3}. The upper branch of the slow manifold follows the fate of an infected individual while the lower branch focuses on the recovery. To emphasize the focus on the infected part of the slow manifold, we define $\omega$ to be a function of the immune response, which we refer to as the immune status. The infection begins at the time point where $W=\omega^*_0$ and continues until it reaches the tip of the manifold, we assume $W= \omega_0$ at this point, where due to the slow-fast system there is a jump into the recovered branch $(P=0)$ of the manifold. At this point, the pathogen is cleared and the infected individual recovers.\\ From the last equation in (\ref{10}) we express the nullcline of $W$ as $W=\frac{\kappa P}{c}$. We add this nullcline to Figure \ref{fig:f3}. We observe that a minimum pathogen threshold is required to activate an immune response. 
\begin{figure}[h]
	\centering
	\includegraphics{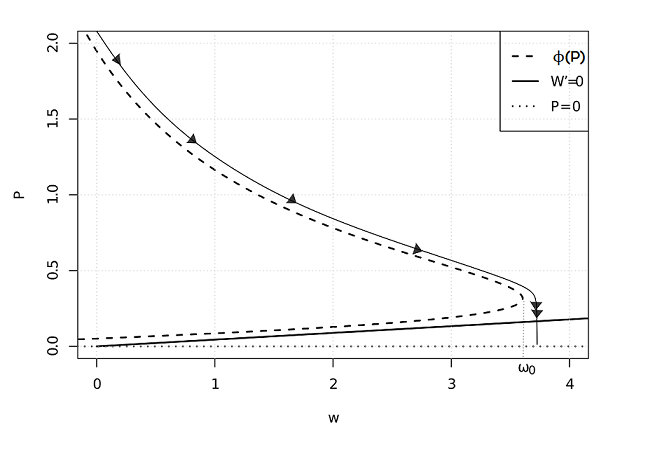}
	\caption{Trajectories of the system}\label{fig:f3}
\end{figure}
\\We now consider the immune status $\omega$, which we have defined above, to be a physiological variable that changes with respect to time. We describe this change by the ODE
\begin{equation*}
\omega'=g(\omega)  \qquad  \omega(0)=\omega^*_0 
\end{equation*} where $\omega^*_0$ is the initial immune status and $g(\omega)$  is the individual immune growth rate given by $g(\omega)=\phi^{-1}(W)-cW$. Additionally, we take $g(0)>0$ such that the immune status increases with time. We note that a single infected individual has immune status  $\omega^*_0$ $(\omega_0^*=0)$ at the start of an infection and recovers at the point where the immune status is $\omega_0$. From a single infected individual, we scale up the infection to the population level where we structure the density of the infected population by the immune status. 
\section{Between-Host Model}\label{sec:3}
The use of physiologically structured models to study populations has been advanced by \citet{metz2014dynamics, diekmann2007physiologically, cushing1998introduction, auger2008structured}. These physiological variables include age, size, immunity status and many more. Epidemic models structuring the population by immunological variables have been explored in  \citet{angulo2013sir, martcheva2006epidemic}. Using the previous work as a basis, we formulate a physiological structured model to represent the epidemiological dynamics of the disease.  The model is size-structured with the physiological variable being the immune status of individuals. The density of the infected population with immune status $\omega$ at time t is given as I (t, $\omega$). The model also takes into account the indirect and direct transmission pathways of the pathogen as well as the role of the environment in the disease dynamics. The between-host model is given as 
\begin{eqnarray}\label{3.1}
\frac{d S(t)}{dt}&=& r-\mu_1 S(t)-  S(t)\int_{0}^{\omega_0}\beta_h P(\omega) I (t,\omega)d\omega -\beta_e S(t)B(t)\nonumber\\&&+ \rho V(t) \nonumber \\
\partial_t I(t, \omega)+\partial_\omega (g(\omega)I(t, \omega))&=&-\mu_2 (\omega) I(t, \omega)\nonumber \\
g(0)I(t,0)&=& S(t)\int_{0}^{\omega_0}\beta_h P(\omega) I(t, \omega)d\omega +\beta_e S(t)B(t) \\
\frac{d V(t)}{dt}&=& g(\omega_0)I(t, \omega_0) -\rho V(t)- \mu_3 V(t) \nonumber \\
\frac{d B(t)}{dt}&=&\int_{0}^{\omega_0} \xi (\omega)P(\omega) I(t, \omega)d\omega -\sigma B(t)\nonumber
\end{eqnarray}
with the initial conditions $S(0)=S_0$,  $V(0)=V_0$, $B(0)=B_0$ and $I(0,\omega)=\phi(\omega)$. The variables S, I, R denote the density of susceptible, infected and immune individuals respectively. The variable B represents the bacterial concentration in the environment. The parameter  $r$ is the rate of recruitment of susceptible individuals, $\mu_1$, $\mu_2$ and $\mu_3$ are the natural death rates of the susceptible, infected and immune individuals respectively. $\beta_h$ is the direct transmission rate, $\beta_e$ is the indirect transmission rate, $g(\omega_0)$ is the recovery rate (due to growth of immunity) of infected individuals,  $\rho$ is the rate of loss of immunity,  $\xi $ is the shedding rate of bacteria back to the environment by infected hosts and $\sigma$ is the death rate of the bacteria. The infectivity of an infectious person is taken to be dependant on the within-host pathogen load $P(\omega)$, which is also considered to be dependant on the immune status. 
\subsection{Existence of Solutions}
\begin{prop}\label{prop 3.1}
	The solution of the PDE in the system (\ref{3.1}) with the initial and boundary conditions is given by
	\begin{equation}
	I(t,\omega)= \begin{cases}\phi (G^{-1}(G(\omega)-t))\frac{g(G^{-1}(G(\omega)-t))}{g(\omega)}e^{-\int_{G^{-1}(G(\omega)-t)}^{\omega}\frac{\mu_2(\tau)}{g(\tau)}d\tau}, \nonumber \\
	H(t-G(\omega))\frac{1}{g(\omega)}e^{-\int_{0}^{\omega}\frac{\mu_2(\tau)}{g(\tau)}d\tau}.
	\end{cases}
	\end{equation}
\end{prop}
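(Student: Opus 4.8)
\emph{Proof proposal.} This is a linear first-order hyperbolic PDE with data prescribed on the inflow boundary $\omega=0$ and on the initial line $t=0$, so I would establish the formula by the method of characteristics. Expanding the transport term, the equation reads $\partial_t I + g(\omega)\partial_\omega I = -(\mu_2(\omega)+g'(\omega))\,I$, so along any curve $s\mapsto\omega(s)$ solving $\dot\omega = g(\omega)$ the solution obeys the linear ODE $\frac{d}{ds}I(s,\omega(s)) = -(\mu_2(\omega(s))+g'(\omega(s)))\,I(s,\omega(s))$. The plan has four steps: integrate the characteristic equation; decide, for a given point $(t,\omega)$, whether its characteristic originates on $\{s=0\}$ or on $\{\omega=0\}$; integrate the ODE along it; and substitute the appropriate data.

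For the characteristics I would introduce $G(\omega):=\int_0^\omega\frac{d\tau}{g(\tau)}$; since $g(0)>0$ and $g>0$ along the infected branch up to the tip $\omega_0$, $G$ is a strictly increasing bijection, $\frac{d}{ds}G(\omega(s))=1$, and the characteristic through $(t,\omega)$ satisfies $G(\omega(s)) = s + G(\omega) - t$. It meets the initial line at $\omega(0) = G^{-1}(G(\omega)-t)$ provided $t\le G(\omega)$, and it meets the boundary line $\omega=0$ at time $s = t-G(\omega)$ provided $t\ge G(\omega)$. This dichotomy is exactly the two branches in the statement.

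Integrating the ODE gives $I(t,\omega) = I(s_0,\omega_0')\exp\!\big(-\int_{s_0}^{t}(\mu_2(\omega(s))+g'(\omega(s)))\,ds\big)$, where $(s_0,\omega_0')$ is the foot of the characteristic; the substitution $s\mapsto\tau=\omega(s)$, $ds = d\tau/g(\tau)$, rewrites the exponent as $-\int_{\omega_0'}^{\omega}\frac{\mu_2(\tau)}{g(\tau)}\,d\tau - \int_{\omega_0'}^{\omega}\frac{g'(\tau)}{g(\tau)}\,d\tau$, and the last integral equals $\ln\!\big(g(\omega)/g(\omega_0')\big)$, which produces the prefactor $g(\omega_0')/g(\omega)$. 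In the regime $t<G(\omega)$ one has $\omega_0' = G^{-1}(G(\omega)-t)$, $s_0=0$, and $I(s_0,\omega_0') = \phi(\omega_0')$, giving the first line. In the regime $t>G(\omega)$ one has $\omega_0'=0$, $s_0 = t-G(\omega)$, and the boundary condition yields $I(s_0,0) = H(t-G(\omega))/g(0)$, where $H(t) := S(t)\int_0^{\omega_0}\beta_h P(\omega)I(t,\omega)\,d\omega + \beta_e S(t)B(t)$; the factor $g(0)$ cancels and the second line follows.

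The two substitutions are routine. The points I would be careful about are: the standing assumption $g>0$ on $(0,\omega_0)$, which is what makes $G$ and $G^{-1}$ well-defined and guarantees that the two regions exhaust $\{(t,\omega):0\le\omega\le\omega_0,\ t\ge0\}$; the matching of the two branches across the interface $t=G(\omega)$, which holds automatically in the mild sense but for a classical solution forces the corner compatibility condition $g(0)\phi(0)=H(0)$; and the fact that $H$ is not a known forcing term but depends on $I$ itself through the $S$- and $B$-equations, so the displayed expression is really a representation formula that, once coupled with the ODEs for $S$, $V$ and $B$, turns the system into a Volterra-type fixed-point problem. That coupling — rather than the characteristic computation — is the real obstacle if one wants a genuine existence-and-uniqueness statement, and I would handle it by a contraction argument on a short time interval followed by a continuation argument using the a priori bounds.
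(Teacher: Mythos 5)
Your derivation is correct and is the standard characteristics argument that produces exactly the stated formula; the paper itself gives no proof of this proposition (it only states the formula and defers existence--uniqueness to cited references), so your computation supplies precisely the omitted argument. You also correctly fill in two things the statement leaves implicit: the case conditions $t<G(\omega)$ versus $t>G(\omega)$ distinguishing the two branches, and the identity of the otherwise undefined symbol $H$ as the boundary inflow $H(t)=g(0)I(t,0)=S(t)\int_0^{\omega_0}\beta_h P(\omega)I(t,\omega)\,d\omega+\beta_e S(t)B(t)$. Your closing caveat --- that this is only a representation formula since $H$ depends on $I$, so genuine well-posedness requires a Volterra-type fixed-point and continuation argument --- is exactly the content the paper delegates to the references it cites.
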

The existence and uniqueness of solutions can be shown using the standard methods for establishing existence and uniqueness in physiologically structured epidemic models \citep{ kim1995mathematical, calsina1995model}.
\subsection{Basic Reproduction Number and Stability of the DFE}
The reproduction number $\mathcal R_0$ is defined as the expected number of secondary infections produced when a single infected person is introduced into a purely susceptible population \citep{diekmann1990definition}. The disease-free equilibrium (DFE) of system (\ref{3.1})  always exists and is given by  $\mathcal E_0 \ = (S^*_0,0,0,0)$ where $S^*_0=\frac{r}{\mu_1}$. To check for stability of the DFE we linearize system (\ref{3.1}) around the disease-free equilibrium and in the process, we also find the threshold condition for the spread of the disease which is considered to be the reproduction number.

\begin{theorem}
	The disease free equilibrium is locally asymptotically stable when $\mathcal R_0<1$ and unstable if $\mathcal R_0>1$, where
	\begin{equation*}
	\mathcal R_0= \frac{r\beta_h}{\mu_1}\int_{0}^{\omega_0} \frac{P(\omega)}{g(\omega)}e^{-\int_{0}^{\omega}\frac{\mu_2(\tau)}{g(\tau)}d\tau
	} d\omega
	+ \frac{r\beta_e}{\mu_1}\int_{0}^{\omega_0}\frac{\xi (\omega)}{\sigma}  \frac{P(\omega)}{g(\omega)}e^{-\int_{0}^{\omega}\frac{\mu_2(\tau)}{g(\tau)}d\tau}d\omega.
	\end{equation*}
\end{theorem}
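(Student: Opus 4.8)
The plan is to linearize system (\ref{3.1}) about the disease-free equilibrium $\mathcal E_0=(S_0^*,0,0,0)$ with $S_0^*=r/\mu_1$, and to extract the characteristic equation whose roots determine stability. First I would write $S(t)=S_0^*+s(t)$, $I(t,\omega)=i(t,\omega)$, $V(t)=v(t)$, $B(t)=b(t)$ and keep only first-order terms. Because $I$, $V$, $B$ are already small near $\mathcal E_0$, the products $S(t)\int \beta_h P(\omega) I\,d\omega$ and $\beta_e S(t)B(t)$ linearize to $S_0^*\int\beta_h P(\omega)i(t,\omega)\,d\omega$ and $\beta_e S_0^* b(t)$, so the $i$-, $v$-, $b$-equations decouple from $s$ at linear order (the $s$-equation only feeds off them and cannot affect their stability). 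This reduces the stability question to the subsystem for $(i,v,b)$.

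Next I would look for exponential solutions $i(t,\omega)=e^{\lambda t}\psi(\omega)$, $v(t)=e^{\lambda t}\bar v$, $b(t)=e^{\lambda t}\bar b$. Substituting into the PDE $\partial_t i+\partial_\omega(g(\omega)i)=-\mu_2(\omega)i$ gives a linear first-order ODE in $\omega$ for $\psi$, namely $(g\psi)'=-(\lambda+\mu_2(\omega))\psi$, whose solution is
\begin{equation*}
\psi(\omega)=\psi(0)\,\frac{g(0)}{g(\omega)}\,e^{-\lambda G(\omega)}\,e^{-\int_0^\omega \frac{\mu_2(\tau)}{g(\tau)}\,d\tau},
\end{equation*}
where $G(\omega)=\int_0^\omega d\tau/g(\tau)$ is the time to traverse the immune-status interval (consistent with Proposition \ref{prop 3.1}). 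The boundary condition $g(0)I(t,0)=S_0^*\int_0^{\omega_0}\beta_h P(\omega)i(t,\omega)\,d\omega+\beta_e S_0^* b(t)$ then becomes a scalar relation; similarly the $v$-equation gives $\bar v$ in terms of $\psi(\omega_0)$ (which drops out of the $i$–$b$ closure), and the $b$-equation gives $\bar b=\frac{1}{\lambda+\sigma}\int_0^{\omega_0}\xi(\omega)P(\omega)\psi(\omega)\,d\omega$. Substituting the formula for $\psi$ and cancelling $\psi(0)$ yields the characteristic equation $\mathcal G(\lambda)=1$, where
\begin{equation*}
\mathcal G(\lambda)=\frac{r\beta_h}{\mu_1}\int_0^{\omega_0}\frac{P(\omega)}{g(\omega)}e^{-\lambda G(\omega)}e^{-\int_0^\omega\frac{\mu_2(\tau)}{g(\tau)}d\tau}d\omega+\frac{r\beta_e}{\mu_1}\,\frac{1}{\lambda+\sigma}\int_0^{\omega_0}\frac{\xi(\omega)P(\omega)}{g(\omega)}e^{-\lambda G(\omega)}e^{-\int_0^\omega\frac{\mu_2(\tau)}{g(\tau)}d\tau}d\omega.
\end{equation*}
Observe $\mathcal G(0)=\mathcal R_0$.

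The final step is the standard argument: $\mathcal G(\lambda)$ is real, strictly decreasing, and continuous for real $\lambda>-\sigma$, with $\mathcal G(\lambda)\to 0$ as $\lambda\to+\infty$. Hence if $\mathcal R_0=\mathcal G(0)>1$ there is a unique positive real root, so $\mathcal E_0$ is unstable; if $\mathcal R_0<1$ then $\mathcal G(0)<1$ and I must show every root has negative real part. For this I would take any root $\lambda$ with $\operatorname{Re}\lambda\geq 0$ and bound $|\mathcal G(\lambda)|\leq \mathcal G(\operatorname{Re}\lambda)\leq\mathcal G(0)=\mathcal R_0<1$, using $|e^{-\lambda G(\omega)}|=e^{-\operatorname{Re}\lambda\,G(\omega)}\leq 1$ and $|\lambda+\sigma|\geq\sigma+\operatorname{Re}\lambda$, contradicting $\mathcal G(\lambda)=1$. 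I would also confirm the ``spectral'' part coming from the $s$-equation contributes only the eigenvalue $-\mu_1<0$ and the decoupled immune compartment contributes $-\rho-\mu_3<0$, so no instability is hidden there.

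The main obstacle is making the passage from the PDE to the characteristic equation fully rigorous: one must justify that the growth bound of the linearized semigroup is governed by the roots of $\mathcal G(\lambda)=1$ (i.e., that these are the only spectral values with $\operatorname{Re}\lambda$ large enough to matter), which requires either citing the general theory of physiologically structured linear semigroups or verifying a compactness/essential-spectrum estimate for the boundary-perturbation operator. The integration-along-characteristics and the monotonicity estimates are routine; the semigroup/spectral-mapping justification is where the real care is needed, and I would handle it by appeal to the framework in the references already cited for existence and uniqueness.
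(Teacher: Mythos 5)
Your proposal is correct and follows essentially the same route as the paper: linearize about $\mathcal E_0$, take an exponential ansatz, integrate the structured PDE along characteristics to obtain the characteristic equation $G(\lambda)=1$ with $G(0)=\mathcal R_0$, use monotonicity of $G$ on the reals to get a positive root when $\mathcal R_0>1$, and bound $|G(\lambda)|\le G(\operatorname{Re}\lambda)\le G(0)<1$ to exclude roots with non-negative real part when $\mathcal R_0<1$. Your added remarks on the decoupled $s$- and $v$-modes and on the semigroup/spectral-mapping justification go slightly beyond what the paper makes explicit, but the core argument is the same.
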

\begin{proof}
	We let $S(t)= S^{*}_0+x(t)$, $I(t,\omega)=I_1(t,\omega)$, $V=y(t)$ and $B=z(t)$. Substituting these perturbed expressions into (\ref{3.1}) and simplifying gives us the linearized system
	\begin{eqnarray}\label{3.2}
	\frac{d x(t)}{dt} &=& -\mu_1 x(t)-  S^{*}_0\int_{0}^{\omega_0}\beta_h P(\omega) I_1 (t,\omega )d\omega 
	-\beta_e S^{*}_0z(t)\nonumber \\&&+ \rho y(t)  \nonumber \\
	\partial_t I_1(t, \omega)+\partial_\omega (g(\omega)I_1(t, \omega))&=&-\mu_2 (\omega) I_1(t, \omega)\nonumber \\
	g(0)I_1(t,0)&=& S^{*}_0\int_{0}^{\omega_0}\beta_h P(\omega) I_1(t, \omega)d\omega
	+\beta_e S^{*}_0z(t) \\
	\frac{d y(t)}{dt}&=& g(\omega_0)I_1(t, \omega_0) - \rho y(t)- \mu_3 y(t)\nonumber \\
	\frac{d z(t)}{dt}&=&\int_{0}^{\omega_0} \xi (\omega)P(\omega) I_1(t, \omega)d\omega -\sigma z(t)\nonumber.
	\end{eqnarray}
	We look for solutions of the form $x(t)=\bar{x}e^{\lambda t}$, $I_1(t,w)=\bar{I_1}(\omega)e^{\lambda t}$, $y(t)=\bar{y}e^{\lambda t}$, $z(t)=\bar{z}e^{\lambda t}$.
	Subsituting the appropriate form in system (\ref{3.2}) gives us the eigen value problem
	\begin{eqnarray}\label{3.3}
	\lambda \bar{x} &=& -\mu_1 \bar{x}-  S^{*}_0\int_{0}^{\omega_0}\beta_h P(\omega) \bar{I_1} (\omega )d\omega 
	-\beta_e S^{*}_0\bar{z}+ \rho\bar{y}  \nonumber \\
	\partial_\omega (g(\omega)\bar{I_1}( \omega))&=&-(\mu_2 (\omega)+\lambda )\bar{I_1}( \omega)\nonumber \\
	g(0)\bar{I_1}(0)&=& S^{*}_0\int_{0}^{\omega_0}\beta_h P(\omega) \bar{I_1}(\omega)d\omega
	+\beta_e S^{*}_0\bar{z} \\
	\lambda \bar{y}&=& g(\omega_0)\bar{I_1}( \omega_0) - \rho \bar{y}- \mu_3 \bar{y} \nonumber \\
	\lambda \bar{z}&=&\int_{0}^{\omega_0} \xi (\omega)P(\omega) \bar{I_1}( \omega)d\omega -\sigma \bar{z} \nonumber.
	\end{eqnarray}
	Solving for the second equation of system (\ref{3.3}) gives us
	\begin{eqnarray*}
		\bar{I_1}(\omega)&=&\frac{\bar{I_1}(0)g(0)}{g(\omega)}e^{-\int_{0}^{\omega}\frac{\mu_2(\tau)+\lambda}{g(\tau)}d\tau}.
	\end{eqnarray*}
	Using the fifth equation of (\ref{3.3}) and $\bar{I_1}(\omega)$ we can express $\bar{z}$ as
	\begin{equation*}
	\bar{z}=\frac{\bar{I_1}(0)g(0)}{\lambda+\sigma}\int_{0}^{\omega_0} \xi (\omega)\frac{P(\omega)}{g(\omega)}e^{-\int_{0}^{\omega}\frac{\mu_2(\tau)+\lambda}{g(\tau)}d\tau}d\omega.
	\end{equation*}
	Substituting this expression of $\bar{z}$ and $\bar{I_1}(\omega)$ into the third equation of (3.3) gives us
	\begin{eqnarray}\label{3.4}
	 g(0)\bar{I_1}(0)&=&S^{*}_0\bar{I_1}(0)g(0)\bigg[\int_{0}^{\omega_0}\beta_h \frac{P(\omega)}{g(\omega)}e^{-\int_{0}^{\omega}\frac{\mu_2(\tau)+\lambda}{g(\tau)}d\tau} d\omega
	\nonumber\\&&+\frac{\beta_e}{\lambda+\sigma}\int_{0}^{\omega_0} \xi (\omega)\frac{P(\omega)}{g(\omega)}e^{-\int_{0}^{\omega}\frac{\mu_2(\tau)+\lambda}{g(\tau)}d\tau}d\omega\bigg].
	\end{eqnarray}
	Respectively, we obtain the characteristic equation $G(\lambda)=1$ with
	\begin{equation}\label{3.5}
	G(\lambda)= S^{*}_0\bigg[\int_{0}^{\omega_0}\beta_h \frac{P(\omega)}{g(\omega)}e^{-\int_{0}^{\omega}\frac{\mu_2(\tau)+\lambda}{g(\tau)}d\tau} d\omega
	+ \frac{\beta_e}{\lambda+\sigma}\int_{0}^{\omega_0} \xi (\omega)\frac{P(\omega)}{g(\omega)}e^{-\int_{0}^{\omega}\frac{\mu_2(\tau)+\lambda}{g(\tau)}d\tau}d\omega\bigg].
	\end{equation}
	For the DFE to be stable, the roots of the characteristic equation should have negative real parts otherwise, it's unstable. We use the approach in \citet{martcheva2015introduction} to check for the stability of the DFE.\\
	A non-zero solution to (\ref{3.4}) exists if only there is a number $\lambda \in \mathbb{R}$ such that $G(\lambda) =1$.
	Differentiating Equation (\ref{3.5}) with respect to $\lambda$ yields $G'(\lambda)<0$ and thus $G(\lambda)$ is a strictly decreasing function, additionally, $\lim_{\lambda\to\infty} G(\lambda)=0$. If $ \hat \lambda $ is a unique real solution of (\ref{3.5}) then $\hat \lambda> 0 $ provided $G(0)> 1 $ and $ \hat \lambda< 0 $ provided $G(0) < 1$. \\
	If we let $H=S^{*}_0\beta_h\frac{P(\omega)}{g(\omega)}e^{-\int_{0}^{\omega} \frac{\mu_2(\tau)}{g(\tau)}d\tau}$ and $J= S^{*}_0\beta_e \sigma(\omega)\frac{P(\omega)}{g(\omega)}e^{-\int_{0}^{\omega} \frac{\mu_2(\tau)}{g(\tau)}d\tau}$ we can express $G(\lambda)$ as
	\begin{equation*}
	G(\lambda)= \int_{0}^{\omega_0}H  e^{-\int_{0}^{\omega}\frac{\lambda}{g(\tau)}d\tau} d\omega
	+ \frac{1}{\lambda+\sigma}\int_{0}^{\omega_0} J  e^{-\int_{0}^{\omega}\frac{\lambda}{g(\tau)}d\tau}d\omega
	\end{equation*}
	Suppose $G(0)<1$ and $\lambda=a\pm bi$ is a complex solution to equation (\ref{3.5}) with $a\geq 0$. Then
	\begin{eqnarray}
	\mid G(\lambda)\mid
	&\le & \int_{0}^{\omega_0}H  e^{-\int_{0}^{\omega}\frac{a}{g(\tau)}d\tau}  d\omega
	+ \frac{1}{(a+\sigma) }\int_{0}^{\omega_0} J  \ e^{-\int_{0}^{\omega}\frac{a}{g(\tau)}d\tau} d\omega \nonumber = G(a)\leq G(0) <1. \nonumber 
	\end{eqnarray}
	It follows then that equation (\ref{3.5}) has a complex solution $\lambda=a\pm ib$ if $a<0$ and that solution must always have a negative real part. 
	$G(0)=1$ is considered to be the threshold for the stability of the disease-free  equilibrium and is called the basic reproduction number, that is $G(0)=\mathcal R_0$ where
	\begin{equation*}
	\mathcal R_0= \frac{r\beta_h}{\mu_1}\int_{0}^{\omega_0} \frac{P(\omega)}{g(\omega)}e^{-\int_{0}^{\omega}\frac{\mu_2(\tau)}{g(\tau)}d\tau
	} d\omega
	+ \frac{r\beta_e}{\mu_1}\int_{0}^{\omega_0}\frac{\xi (\omega)}{\sigma}  \frac{P(\omega)}{g(\omega)}e^{-\int_{0}^{\omega}\frac{\mu_2(\tau)}{g(\tau)}d\tau}d\omega.
	\end{equation*}
	The disease-free equilibrium is therefore locally asymptotically stable if $\mathcal R_0 <1$ and unstable otherwise.\qed
\end{proof}
\subsection{Existence of   the Endemic Equilibrium}
\begin{prop}
	A unique positive endemic equilibrium of system (\ref{3.1}) given by $\mathcal{E^*}=(S^*,I^*(\omega),V^*, B^*)$ exists if $\mathcal R_0>1$.
\end{prop}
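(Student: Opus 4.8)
The plan is to construct the equilibrium explicitly by setting every time derivative in (\ref{3.1}) to zero and collapsing the system to a single scalar equation in the boundary unknown $I^*(0)$. First I would integrate the stationary transport equation $(g(\omega)I^*(\omega))' = -\mu_2(\omega)I^*(\omega)$ along characteristics to obtain
\begin{equation*}
I^*(\omega) = m\,\pi(\omega), \qquad \pi(\omega) := \frac{1}{g(\omega)}\exp\!\Big(-\!\int_0^{\omega}\frac{\mu_2(\tau)}{g(\tau)}\,d\tau\Big), \qquad m := g(0)I^*(0),
\end{equation*}
using the standing assumption from Section~\ref{sec:2} that $g>0$ on $(0,\omega_0)$, so that $\pi$ is well defined and the integrals below are finite. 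Substituting this into the last two stationary equations immediately gives $B^* = \tfrac{m}{\sigma}\int_0^{\omega_0}\xi(\omega)P(\omega)\pi(\omega)\,d\omega$ and $V^* = \tfrac{g(\omega_0)I^*(\omega_0)}{\rho+\mu_3} = \tfrac{m\theta}{\rho+\mu_3}$, with $\theta := e^{-\int_0^{\omega_0}\mu_2(\tau)/g(\tau)\,d\tau}\in(0,1]$, so that $B^*$ and $V^*$ are linear in $m$.

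Next I would exploit the renewal (boundary) condition. Writing $K := \beta_h\int_0^{\omega_0}P(\omega)\pi(\omega)\,d\omega + \tfrac{\beta_e}{\sigma}\int_0^{\omega_0}\xi(\omega)P(\omega)\pi(\omega)\,d\omega$, the equilibrium force of infection is $\Phi^* = \int_0^{\omega_0}\beta_h P(\omega)I^*(\omega)\,d\omega + \beta_e B^* = mK$, and the boundary condition $g(0)I^*(0) = S^*\Phi^*$ reads $m = S^* m K$. For an endemic state $m>0$, which forces $S^* = 1/K$; recalling that $\mathcal R_0 = \tfrac{r}{\mu_1}K = S_0^* K$, this is exactly $S^* = S_0^*/\mathcal R_0 > 0$, determined uniquely. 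Finally, inserting $S^*\Phi^* = m$ and $V^* = m\theta/(\rho+\mu_3)$ into the stationary $S$-equation $0 = r - \mu_1 S^* - S^*\Phi^* + \rho V^*$ yields the single scalar equation
\begin{equation*}
m\Big(1 - \frac{\rho\theta}{\rho+\mu_3}\Big) = r - \frac{\mu_1}{K}.
\end{equation*}

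The bracket multiplying $m$ is strictly positive, since $\theta\le 1$ and $\rho/(\rho+\mu_3) < 1$ (using $\mu_3>0$; if $\rho=0$ it equals $1$) — verifying this sign is really the only step that is not pure substitution, since it is where the loss-of-immunity feedback $\rho V^*$ must be accounted for carefully so the coefficient of $m$ does not vanish or change sign. Hence $m$ is uniquely determined, and $m>0$ holds precisely when $r > \mu_1/K$, i.e. when $\mathcal R_0 = rK/\mu_1 > 1$. Under that condition $I^*(\omega) = m\pi(\omega)$, $V^*$ and $B^*$ are all uniquely determined and strictly positive while $S^* = 1/K$ is positive unconditionally, so a unique positive endemic equilibrium exists when $\mathcal R_0 > 1$ (and, conversely, none exists when $\mathcal R_0 \le 1$), which proves the claim. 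I do not anticipate a genuine obstacle here; the only care needed is the bookkeeping around the immunity-loss term and the verification that $\pi$ and $K$ are finite under the model's hypotheses on $g$, $\mu_2$ and $\omega_0$.
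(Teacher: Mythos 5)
Your proposal is correct and follows essentially the same route as the paper's proof: solve the stationary transport equation for $I^*(\omega)$ in terms of the boundary value, express $B^*$, $V^*$ and $S^*$ in terms of it, and reduce the stationary $S$-equation to a linear scalar equation whose positive solution requires $\mathcal R_0>1$. The only differences are notational ($m$, $K$, $\theta$ in place of the paper's explicit integrals), and your explicit remark that the coefficient $1-\rho\theta/(\rho+\mu_3)$ must be checked to be positive is exactly the observation the paper makes at the end of its argument.
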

\begin{proof}
	To find the endemic equilibrium we solve the system
	\begin{eqnarray}\label{3.6}
	0&=& r- \mu_1 S^*-  S^*\int_{0}^{\omega_0}\beta_h P(\omega) I^* (\omega)d\omega -\beta_e S^*B^*+ \rho V^* \nonumber \\
	\partial_\omega(g(\omega) I^*( \omega))&=&-\mu_2 (\omega) I^*( \omega) \nonumber \\
	g(0)I^*(0)&=& S^*\int_{0}^{\omega_0}\beta_h P(\omega) I^*(\omega)d\omega +\beta_e S^*B^*  \\
	0&=& g(\omega_0)I^*(\omega_0) -\rho V^*- \mu_3 V^* \nonumber \\
	0&=&\int_{0}^{\omega_0} \xi (\omega)P(\omega) I^*(\omega)d\omega -\sigma B^*\nonumber.
	\end{eqnarray}
	Solving for the second equation in system (3.6) gives	$I^*(\omega)=\frac{I^*(0)g(0)}{g(\omega)}e^{-\int_{0}^{\omega}\frac{\mu_2(\tau)}{g(\tau)}d\tau}$.	If we let $\frac{1}{g(\omega)} e^{-\int_{0}^{\omega}\frac{\mu_2(\tau)}{g(\tau)}d\tau}$ be $\pi(\omega)$ then 
	\begin{equation*}
	I^*(\omega)=I^*(0)g(0)\pi(\omega).
	\end{equation*}
	Substituting $I^*(\omega)$ to the fourth and fifth equations in system (\ref{3.6}) gives
	\begin{equation*}
	B^*=I^*(0)g(0)\int_{0}^{\omega_0} \frac{\xi(\omega)}{\sigma}P(\omega)\pi(\omega) d\omega, \qquad V^*=\frac{g(\omega_0)I^*(0)g(0) \pi (\omega_0)}{(\rho +\mu_3)}.
	\end{equation*} 
	Substituting $I^*(\omega)$ and $B^*$ in the third equation of system (\ref{3.6}) yields
	\begin{equation*}
	S^*= \frac{1}{\int_{0}^{\omega_0}\beta_h P(\omega)\pi(\omega) d\omega +\int_{0}^{\omega_0}\beta_e \frac{\xi(\omega)}{\sigma}P(\omega)\pi(\omega) d\omega }.
	\end{equation*}
	The first equation in 
	system (\ref{3.6}) can be rewritten as
	\begin{equation}\label{3.7}
	r- \mu_1 S^*- g(0)I^*(0) + \rho V^* = 0 .
	\end{equation}
	Rewriting $S^*$ in terms of $\mathcal R_0$ and substituting it and $V^*$ into equation (\ref{3.7} ) yields
	\begin{equation*}
	r- \frac{r}{\mathcal R_0 }-g(0)I^*(0)+I^*(0)g(0)\frac{\rho g(\omega_0) \pi (\omega_0)}{(\rho +\mu_3)}=0.
	\end{equation*}
	Making $I^*(0)$ the subject yields
	$I^*(0)=\frac{r (1-\frac{1}{\mathcal R_0})}{g(0)(1-\frac{\rho g(\omega_0) \pi (\omega_0)}{(\rho +\mu_3)})}$. Substituting $I^*(0)$ back into the expression of $I^*(\omega)$ gives us 
	\begin{equation}\label{3.8}
	I^*(\omega)=\frac{r (1-\frac{1}{\mathcal R_0})}{(1-\frac{\rho g(\omega_0) \pi (\omega_0)}{(\rho +\mu_3)})}\pi (\omega). 
	\end{equation}
	Since $\frac{\rho g(\omega_0) \pi (\omega_0)}{(\rho +\mu_3)}=\frac{\rho e^{-\int_{0}^{\omega}\frac{\mu_2(\tau)}{g(\tau)}d\tau}}{(\rho +\mu_3)}<1$, we need to have $\mathcal R_0 >1$ to get a positive $I^*(\omega)$, thus the endemic equilibrium $\mathcal{E^*}=(S^*,I^*(\omega),V^*, B^*)$ exists only if  $\mathcal R_0 >1$. \qed
\end{proof}	
\subsection{Local Stability of the Endemic Equilibrium}
We assume that $\mathcal R_0>1$ and linearize system (\ref{3.1}) around the endemic equilibrium. We let  $S(t)= S^{*}+x(t)$, $I(t,\omega)=I^{*}(\omega)+I_1(t,\omega)$, $V= V^{*}+y(t)$ and $B=B^{*}+z(t)$ and substitute these expressions in (\ref{3.1}) to get the linearized system 
\begin{eqnarray}\label{3.9}
\frac{d x(t)}{dt} &=& - \mu_1 x(t)-  S^{*}\int_{0}^{\omega_0}\beta_h P(\omega)I_1(t, \omega) d\omega+\rho y(t)
\nonumber\\& &-x(t)\int_{0}^{\omega_0}\beta_h P(\omega)I^{*}(\omega)d(\omega) -\beta_e S^{*}z(t)-\beta_e B^{*}x(t)   \nonumber \\
\partial_t I_1(t, \omega) +\partial_\omega (g(\omega)I_1(t, \omega))&=&-\mu_2 (\omega)I_1(t, \omega) \nonumber \\
g(0)I_1(t,0)&=&  S^{*}\int_{0}^{\omega_0}\beta_h P(\omega)I_1(t, \omega) d\omega +\beta_e S^{*}z(t) \nonumber \\& &+ x(t)\int_{0}^{\omega_0}\beta_h P(\omega)I^{*}(\omega)d(\omega) +\beta_e B^{*}x(t) \\
\frac{d y(t)}{dt}&=& g(\omega_0)I_1(t, \omega_0) -\rho y(t) -\mu_3y(t) \nonumber \\
\frac{d z(t)}{dt}&=&\int_{0}^{\omega_0} \xi (\omega)P(\omega) I_1(t, \omega)d\omega-\sigma z(t)\nonumber.
\end{eqnarray}
We look for solutions of the form $x(t)=x e^{\lambda t}$, $I_1(t,w)=I_1(\omega)e^{\lambda t}$, $y(t)=y e^{\lambda t}$, $z(t)=z e^{\lambda t}$.
Subsituting the appropriate form in system (\ref{3.9}) yields the eigen value problem
\begin{eqnarray}\label{3.10}
\lambda x &=& - \mu_1 x-  S^{*}\int_{0}^{\omega_0}\beta_h P(\omega)I_1(\omega) d\omega
-x\int_{0}^{\omega_0}\beta_h P(\omega)I^{*}(\omega)d(\omega) \nonumber\\& & -\beta_e S^{*}z-\beta_e B^{*}x +\rho y  \nonumber \\
\partial_\omega (g(\omega)I_1( \omega))&=&-(\mu_2 (\omega)+\lambda )I_1(\omega) \nonumber \\
g(0)I_1(0)&=&  S^{*}\int_{0}^{\omega_0}\beta_h P(\omega)I_1(\omega) d\omega + x\int_{0}^{\omega_0}\beta_h P(\omega)I^{*}(\omega)d(\omega) \\& & +\beta_e S^{*}z+\beta_e B^{*}x \nonumber \\
\lambda y&=& g(\omega_0)I_1( \omega_0) -\rho y -\mu_3y \nonumber \\
\lambda z&=&\int_{0}^{\omega_0} \xi (\omega)P(\omega) I_1( \omega)d\omega-\sigma z\nonumber.
\end{eqnarray}
Solving the second equation in system (\ref{3.10}) gives us
\begin{equation}\label{3.11}
I_1(\omega)=I_1(0)g(0)\pi_1 (\omega)e^{-\int_{0}^{\omega}\frac{\lambda}{g(\tau)}d\tau},
\end{equation}
where $\pi_1(\omega)=\frac{1}{g(\omega)}e^{-\int_{0}^{\omega}\frac{\mu_2(\tau)}{g(\tau)}d\tau}$.
Adding the first and third equation in (\ref{3.9}) gives us $x=\frac{\rho y-g(0)I_1(0)}{\lambda+ \mu_1}$.
Solving for $y$ and substituting its expression in $x$ and $z$ gives
\small\begin{equation*} y=\frac{g(0)I_1(0)g(\omega_0 )\pi_1(\omega_0)e^{-\int_{0}^{\omega}\frac{\lambda}{g(\tau)}d\tau}}{\lambda+ \rho+ \mu_3},\quad x=\frac{\frac{\rho g(0)I_1(0)g(\omega_0 )\pi_1(\omega_0)e^{-\int_{0}^{\omega}\frac{\lambda}{g(\tau)}d\tau}}{\lambda+ \rho+ \mu_3}  -g(0)I_1(0)}{\lambda+ \mu_1}
\end{equation*}\normalsize
\begin{equation*}
z=\frac{I_1(0)g(0)\int_{0}^{\infty} \xi (\omega)P(\omega) \pi_1 (\omega)e^{-\int_{0}^{\omega}\frac{\lambda}{g(\tau)}d\tau}}{\lambda+\sigma}.
\end{equation*}
Letting $K=\int_{0}^{\omega_0}\beta_h P(\omega)I^{*}(\omega)d(\omega)$ and substituting $x, y,$ and $z$ in the third equation of (\ref{3.9}) we get the characteristic equation
\begin{eqnarray}\label{ex.2}	
1&=& S^{*}\int_{0}^{\omega_0}\beta_h P(\omega)\pi_1 (\omega)e^{-\int_{0}^{\omega}\frac{\lambda}{g(\tau)}d\tau} d\omega +\bigg[\frac{\frac{\rho g(\omega_0 )\pi_1(\omega_0)e^{-\int_{0}^{\omega}\frac{\lambda}{g(\tau)}d\tau}}{\lambda+ \rho+ \mu_3}  -1}{\lambda+ \mu_1}\bigg] K \\ &&+\beta_e S^{*}\frac{\int_{0}^{\omega_0} \xi (\omega)P(\omega) \pi_1 (\omega)e^{-\int_{0}^{\omega}\frac{\lambda}{g(\tau)}d\tau} d\omega}{\lambda+\sigma} +\beta_e B^{*}\bigg[\frac{\frac{\rho g(\omega_0 )\pi_1(\omega_0)e^{-\int_{0}^{\omega}\frac{\lambda}{g(\tau)}d\tau}}{\lambda+ \rho+ \mu_3}  -1}{\lambda+ \mu_1}\bigg]\nonumber .
\end{eqnarray}
For a single cholera infection, the loss of immunity only plays a minor role. We therefore focus on the case $\rho=0$.
\begin{theorem}
	Given no loss of immunity, the endemic equilibrium is locally asymptotically stable whenever $\beta_e=0$ and $g(\omega)=1$. 
\end{theorem}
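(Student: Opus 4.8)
The plan is to go back to the characteristic equation (\ref{ex.2}), impose the three hypotheses $\rho=0$, $\beta_e=0$ and $g\equiv1$, and then show the resulting scalar equation has no root with nonnegative real part by comparing the real parts of its two sides. First I would carry out the substitution: with $\rho=0$ each bracketed factor in (\ref{ex.2}) collapses to $-1/(\lambda+\mu_1)$; with $\beta_e=0$ the two environmental terms vanish; and with $g\equiv1$ one has $e^{-\int_0^{\omega}\lambda/g(\tau)\,d\tau}=e^{-\lambda\omega}$ and $\pi_1(\omega)=e^{-\int_0^{\omega}\mu_2(\tau)\,d\tau}$. What survives is the scalar equation
\begin{equation*}
Q(\lambda)-1=\frac{K}{\lambda+\mu_1},\qquad
Q(\lambda):=S^{*}\beta_h\int_0^{\omega_0}P(\omega)\,\pi_1(\omega)\,e^{-\lambda\omega}\,d\omega,\qquad
K:=\int_0^{\omega_0}\beta_h P(\omega)I^{*}(\omega)\,d\omega,
\end{equation*}
which is meaningful for $\lambda\neq-\mu_1$, and $\lambda=-\mu_1$ is visibly not a root. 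Two preliminary facts are needed: first, $Q(0)=1$, since the formula for $S^{*}$ obtained in the existence proof of $\mathcal{E^*}$ gives, when $\beta_e=0$ and $g\equiv1$ (so that the kernel $\pi$ there coincides with $\pi_1$), exactly $S^{*}\beta_h\int_0^{\omega_0}P(\omega)\pi_1(\omega)\,d\omega=1$; second, $K>0$, because $I^{*}(\omega)>0$ on $(0,\omega_0)$ by (\ref{3.8}) when $\mathcal R_0>1$ (indeed a short computation gives $K=\mu_1(\mathcal R_0-1)$, but only positivity is used).

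The core of the argument is a sign computation. Let $\lambda=a+bi$ with $a\geq0$ be any root. Since $P,\pi_1\geq0$ and $e^{-a\omega}\cos(b\omega)\leq e^{-a\omega}\leq1$ for $\omega\in[0,\omega_0]$,
\begin{equation*}
\mathrm{Re}\,Q(\lambda)
= S^{*}\beta_h\int_0^{\omega_0}P(\omega)\pi_1(\omega)e^{-a\omega}\cos(b\omega)\,d\omega
\;\leq\; S^{*}\beta_h\int_0^{\omega_0}P(\omega)\pi_1(\omega)\,d\omega
= Q(0)=1,
\end{equation*}
so $\mathrm{Re}\big(Q(\lambda)-1\big)\leq0$. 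On the other hand, $\mathrm{Re}\big(K/(\lambda+\mu_1)\big)=K(a+\mu_1)/|\lambda+\mu_1|^{2}>0$ because $K>0$, $\mu_1>0$ and $a\geq0$. These two inequalities are incompatible with the characteristic equation, so every root satisfies $\mathrm{Re}\,\lambda<0$; by the same correspondence between characteristic roots and stability used for the DFE, this yields local asymptotic stability of $\mathcal{E^*}$.

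I expect the main obstacle to be not the sign argument — which is short — but the bookkeeping in the reduction of (\ref{ex.2}): carefully verifying that once $\rho=0$ and $\beta_e=0$ the $V$- and $B$-components of the eigenvalue problem (\ref{3.10}) decouple from the $(x,I_1)$-subsystem and therefore contribute no further roots with nonnegative real part, and re-deriving $Q(0)=1$ from the explicit $S^{*}$. The genuinely load-bearing point is small: the assumption $g\equiv1$ is what produces the pure Laplace kernel $e^{-\lambda\omega}$, whose modulus is $\leq1$ on $[0,\omega_0]$ when $\mathrm{Re}\,\lambda\geq0$; $\beta_e=0$ removes the environmental term $\propto 1/(\lambda+\sigma)$, whose real part has no fixed sign; and $\rho=0$ removes the immunity-loss feedback through $V$. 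Without these three reductions the comparison of real parts above breaks down, which is presumably why the theorem is stated under exactly these conditions.
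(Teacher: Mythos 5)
Your proposal is correct and follows essentially the same route as the paper: reduce the characteristic equation (\ref{ex.2}) under $\rho=0$, $\beta_e=0$, $g\equiv1$ to a scalar equation, use the endemic-equilibrium identity $S^{*}\beta_h\int_0^{\omega_0}P\pi_1\,d\omega=1$, and derive a contradiction for any root with $\mathrm{Re}\,\lambda\ge0$. The only (immaterial) difference is that you compare real parts of the two sides, whereas the paper compares moduli of the rearranged equation $\frac{\lambda+\mu_1+K}{\lambda+\mu_1}=Q(\lambda)$; both hinge on the same facts $Q(0)=1$ and $K>0$.
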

\begin{proof}
	The characteristic equation (\ref{ex.2}) reduces to
	\begin{equation}	
	1= S^{*}\int_{0}^{\omega_0}\beta_h P(\omega)\hat \pi_1 (\omega)e^{-\lambda\omega}  d\omega -\frac{K}{\lambda+ \mu_1}   \nonumber \qquad where\qquad \hat \pi_1(\omega)=e^{-\int_{0}^{\omega}\mu_2(\tau)d\tau}.
	\end{equation}
	\begin{equation}\label{ex.1}
	\frac{\lambda+ \mu_1+K}{\lambda+ \mu_1}= S^{*}\int_{0}^{\omega_0}\beta_h P(\omega)\hat \pi_1 (\omega)e^{-\lambda\omega}  d\omega. 
	\end{equation}
	If we let $\lambda=a+ib$ and assume that $a\ge 0$, then for $\mathbb{R}(\lambda)\ge 0$ the left hand side of equation (\ref{ex.1}) gives
	\begin{equation*}
	\mid \frac{\lambda+ \mu_1+K}{\lambda+ \mu_1}\mid>1, 
	\end{equation*}
	while the right hand side yields
	\begin{eqnarray*}
		\mid S^{*}\int_{0}^{\omega_0}\beta_h P(\omega)\hat \pi_1 (\omega)e^{-\lambda\omega}  d\omega\mid &\le& S^{*}\int_{0}^{\omega_0}\beta_h P(\omega)\hat \pi_1 (\omega)\mid e^{-\lambda\omega}\mid  d\omega\\
		&\le& S^{*}\int_{0}^{\omega_0}\beta_h P(\omega)\hat \pi_1 (\omega) e^{-a\omega}  d\omega\\
		&\le& S^{*}\int_{0}^{\omega_0}\beta_h P(\omega)\hat \pi_1 (\omega)=1. 
	\end{eqnarray*}	
	Thus, given $\lambda$ with $\mathbb{R}(\lambda)\ge0$, the left side of equation (\ref{ex.1}) is strictly greater than one while the right side of equation (\ref{ex.1}) is strictly less than one, which is a contradiction. Therefore, any $\lambda$ with non-negative real parts does not satisfy the characteristic equation and the endemic equilibrium is locally asymptotically stable.\qed 
\end{proof}
\subsection{Global Stability of the Endemic Equilibrium}
\citet{meehan2019global} describes a susceptible class experiencing a force of infection $F(t)$ as
\begin{eqnarray}
\frac{d S(t)}{dt}&=& \lambda- \mu_1 S(t)-  S(t)F(t)\nonumber\\
F(t)&=&\int_{0}^{\bar \tau}A(\tau) S(t-\tau)F(t-\tau)d\tau
\end{eqnarray}
where $A(\tau)$ is the contribution of  individuals infected for time $\tau$  to the force of infection and the infectivity kernel $A\ge 0$. He assumes that the maximal age of infection $\bar \tau<\infty$ and that the infection confers permanent immunity. Using Lyapunov functionals, he concludes from the integral-form, with compact support of the
integral kernel, the global stability of the DFE when $\mathcal{R}_0\leq1$ and the global stability of the endemic equilibrium when $\mathcal{R}_0>1$. We aim to formulate system (\ref{3.1}) in terms of his results to establish global stability.
\begin{theorem} 
	Given no loss of immunity, the endemic equilibrium is globally asymptotically stable if $\mathcal{R}_0>1$.
\end{theorem}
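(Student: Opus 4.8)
The plan is to reduce system (\ref{3.1}) under the standing assumption $\rho=0$ to the scalar renewal (integral) formulation analysed by \citet{meehan2019global}, and then to transfer his Lyapunov-functional conclusion. With $\rho=0$ the immune compartment $V$ is driven by, but does not feed back into, the $(S,I,B)$ block, so it is enough to establish global convergence for $(S,I,B)$ and afterwards recover $V(t)\to V^{*}$ from its linear, asymptotically autonomous equation.

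Concretely, I would first integrate the transport equation for $I$ along characteristics, exactly as in Proposition \ref{prop 3.1}, writing $I(t,\omega)=i(t-G(\omega))\,\pi(\omega)$ for $t>G(\omega)$, where $i(t):=g(0)I(t,0)$ is the boundary incidence, $G(\omega):=\int_0^{\omega}d\tau/g(\tau)$, and $\pi(\omega):=g(\omega)^{-1}e^{-\int_0^{\omega}\mu_2(\tau)/g(\tau)\,d\tau}$; since $g>0$ on $[0,\omega_0)$ we have $\bar\tau:=G(\omega_0)<\infty$, and the part of $I$ carrying the initial datum $\phi$ is supported in $t<\bar\tau$ and hence drops out of the $\omega$-limit set. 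Solving the linear $B$-equation by variation of constants expresses $B(t)$ as a convolution of $\int_0^{\omega_0}\xi(\omega)P(\omega)I(t,\omega)\,d\omega$ with $e^{-\sigma t}$. Substituting both expressions into the force of infection $F(t):=\int_0^{\omega_0}\beta_h P(\omega)I(t,\omega)\,d\omega+\beta_e B(t)$ and changing the integration variable to $\tau=G(\omega)$ turns the boundary relation into $i(t)=S(t)F(t)$ with $F(t)=\int_0^{\infty}A(\tau)\,i(t-\tau)\,d\tau$ up to an exponentially small transient, where $A\ge 0$; together with $\dot S=r-\mu_1 S-SF$ this is precisely Meehan's coupled pair. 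One then checks that $\mathcal R_{0}=\tfrac{r}{\mu_1}\int_0^{\infty}A(\tau)\,d\tau$ agrees with the expression derived in Section \ref{sec:3}, so that, once the hypotheses of \citet{meehan2019global} are verified, global asymptotic stability of the endemic equilibrium for $\mathcal R_{0}>1$ is inherited directly, and $B(t)\to B^{*}$, $V(t)\to V^{*}$ follow from asymptotic autonomy of their linear equations.

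The hard part is the environmental loop. The $\beta_e B$ term contributes to the renewal kernel the piece $\tau\mapsto \beta_e\int_0^{\min(\tau,\bar\tau)}\xi(G^{-1}(\theta))P(G^{-1}(\theta))e^{-\int_0^{G^{-1}(\theta)}\mu_2/g}\,e^{-\sigma(\tau-\theta)}\,d\theta$, which has a genuine exponential tail and is \emph{not} compactly supported, whereas the Lyapunov construction of \citet{meehan2019global} is stated for kernels of compact support. I would resolve this in one of two ways: either specialise to $\beta_e=0$, in which case the reduction above literally yields a kernel supported on $[0,\bar\tau]$ — the setting already in force in the preceding local-stability theorem — or augment Meehan's Volterra--Lyapunov functional by an additional Goh-type term in $B$, of the form $B-B^{*}-B^{*}\ln(B/B^{*})$, tuned against the linear $B$-equation so that the exponential memory is absorbed into a genuine Lyapunov function for the coupled $(S,I,B)$ system. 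The remaining ingredients — local integrability of $A$, strict positivity of $S$ along trajectories, boundedness of orbits (supplied by an absorbing-set estimate analogous to the one in Section \ref{sec:2}), and the irrelevance of the finite initial layer $t<\bar\tau$ — are routine verifications rather than real obstacles; the compact-support requirement forced by the environmental pathway is the only point that genuinely needs an idea beyond quoting the reference.
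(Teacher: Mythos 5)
Your overall route is the same as the paper's: integrate the transport equation along characteristics, convolve out the environmental compartment, collapse the boundary condition into a scalar renewal equation $F(t)=\int A(\tau)S(t-\tau)F(t-\tau)\,d\tau$ coupled to $\dot S=r-\mu_1S-SF$, and invoke the Lyapunov-functional result of \citet{meehan2019global}; your identification of $\mathcal R_0=\frac{r}{\mu_1}\int A(\tau)\,d\tau$ with the expression from the linearization is also the same consistency check the paper relies on implicitly. Where you diverge is precisely at the point you flag as "the hard part," and you have correctly located the only genuine obstruction: with the ODE $\dot B=\int\xi P I\,d\omega-\sigma B$ the environmental contribution to the kernel carries an exponential tail, so $A$ is not compactly supported and Meehan's hypotheses do not apply verbatim. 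The paper resolves this neither by setting $\beta_e=0$ nor by augmenting the functional with a Goh-type term in $B$; instead it \emph{restructures the model}, replacing $B(t)$ by an age-density $B(t,a)$ with $(\partial_t+\partial_a)B=-\sigma B$ and a finite maximal age $\bar a$, so that the kernel is supported on $[0,\bar a+G(\omega_0)]$ and compact support is restored by fiat. That is a modification of system (\ref{3.1}), justified biologically, and it proves the theorem for the truncated model only. Your option (a) proves a strictly weaker statement than the theorem as stated (it drops the environmental route entirely), and your option (b) — absorbing the exponential memory into an additional $B-B^{*}-B^{*}\ln(B/B^{*})$ term — is the right idea for the \emph{unmodified} model and would, if executed, yield a stronger result than the paper's; but as written it is a proposal, not a proof: the derivative of the combined functional along trajectories still has to be shown nonpositive, and that computation (the analogue of the graph-theoretic/Volterra cancellations in Shuai--van den Driessche-type cholera arguments) is the substantive missing step. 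So: same skeleton, a correctly diagnosed gap, and two candidate patches neither of which coincides with the paper's (which is to change the model so the gap does not arise).
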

\begin{proof}
	For technical reasons, we restructure system (\ref{3.1}) such that the environmental bacteria have a maximal age, that is,  $B(t)=\int_{0}^{\bar a} B(t,a)da$. We consider this to be more realistic and biologically meaningful. 
	We define the force of infection 
	$F(t)=\int_{0}^{\omega_0}\beta_h P(\omega) I (t,\omega)d\omega +\int_{0}^{\bar a}\beta_e B(t,a)da$ such that (\ref{3.1}) becomes
	\begin{eqnarray}\label{3.14}
	\frac{d S(t)}{dt}&=& r- \mu_1 S(t)-  S(t)F(t)\nonumber\\
	\partial_t I(t,\omega)+\partial_\omega(g(\omega)I(t, \omega))&=&-\mu_2 (\omega) I(t, \omega)\nonumber \\
	g(0)I(t,0)&=&S(t)F(t)\\
	(\partial_t +\partial_a) B(t,a)&=&-\sigma B(t,a)\nonumber \\
	B(t,0)&=&\int_{0}^{\omega_0} \xi (\omega)P(\omega) I(t, \omega)d\omega\nonumber. 
	\end{eqnarray}
	We aim to rewrite $F(t)$ as 	$F(t)=\int_{0}^{\bar \tau}A(\tau) S(t-\tau)F(t-\tau)d\tau$. From proposition (\ref{prop 3.1}) 
	\begin{equation*}
	I(t,\omega)=\begin{cases} S(t-G(\omega))F(t-G(\omega))\frac{1}{g(\omega)}e^{-\int_{0}^{\omega}\frac{\mu_2(\omega')}{g(\omega')}d\omega'}&\omega\le\omega_0\\0   & \omega>\omega_0.
	\end{cases}
	\end{equation*}
	Solving for $B(t,a)$ gives
	\begin{eqnarray*}
		B(t,a)&=&e^{-\sigma a}\int_{0}^{\omega_0} \xi(\omega)P(\omega)I(t-a,\omega)d\omega\\
		 \int_{0}^{\bar a}B(t,a)da&=&
		 \int_{0}^{\bar a}e^{-\sigma a}\int_{0}^{\omega_0} \frac{\xi(\omega)P(\omega)}{g(\omega)}S(t-a-G(\omega)) F(t-a-G(\omega))\\&&\qquad \cdot e^{-\int_{0}^{\omega}\frac{\mu_2(\omega')}{g(\omega')}d\omega'}d\omega da.
	\end{eqnarray*}
	If we let $a+G(\omega)=\theta$ then
	\begin{eqnarray*}
		\int_{0}^{\bar a}B(t,a)da&=&\int_{0}^{\bar a}e^{-\sigma a}\int_{a+G(0)}^{a+G(\omega_0)} \xi(G^{-1}(\theta-a))P(G^{-1}(\theta-a))S(t-\theta)\\&&\qquad \cdot F(t-\theta)e^{-\int_{0}^{G^{-1}(\theta-a)}\mu_2(\omega')d\omega'}d\theta da.
	\end{eqnarray*}
	The force of infection becomes
	\begin{eqnarray*}
	F(t)&=&\int_{0}^{\omega_0}\beta_h P(\omega) S(t-\omega)F(t-\omega)e^{-\int_{0}^{\omega}\mu_2(\omega')d\omega'}d\omega\\&&+\int_{0}^{\bar a}\beta_ee^{-\sigma a}\int_{a+G(0)}^{a+G(\omega_0)} \xi(G^{-1}(\theta-a))P(G^{-1}(\theta-a))S(t-\theta)\\&&\qquad \cdot F(t-\theta)e^{-\int_{0}^{G^{-1}(\theta-a)}\mu_2(\omega')d\omega'}d\theta da.
	\end{eqnarray*}
	Defining
	\begin{eqnarray*}
		K_h(\omega)&=&\beta_h P(\omega) e^{-\int_{0}^{\omega}\mu_2(\omega')d\omega'}\\
		K_e(\theta)&=
		&\
	\begin{cases}\int_{0}^{\theta}\beta_ee^{-\sigma a} \xi(G^{-1}(\theta-a))P(G^{-1}(\theta-a))S(t-\theta)F(t-\theta)\\\qquad \cdot e^{-\int_{0}^{G^{-1}(\theta-a)}\mu_2(\omega')d\omega'}d\theta \quad if \quad \theta<G(\omega_0)\\
			\int_{\theta-G(\omega_0)}^{\theta}\beta_ee^{-\sigma a} \xi(G^{-1}(\theta-a))P(G^{-1}(\theta-a))S(t-\theta)F(t-\theta)\\\qquad \cdot e^{-\int_{0}^{G^{-1}(\theta-a)}\mu_2(\omega')d\omega'}d\theta \quad if\quad G(\omega_0)\le \theta\le G(\omega_0)+\bar a,
		\end{cases}
	\end{eqnarray*}
	gives the renewal equation
	\begin{eqnarray*}
		F(t)&=&\int_{0}^{\omega_0}K_h(\omega) S(t-\omega)F(t-\omega)d\omega+\int_{0}^{\bar a+G(\omega_0)}K_e(\omega) S(t-\omega)F(t-\omega)d\omega\\
		F(t)&=&\int_{0}^{\omega_0}A(\omega) S(t-\omega)F(t-\omega)d\omega 
	\end{eqnarray*}
	where
	\begin{equation}
	A(\omega)= \begin{cases}K_h(\omega) & \omega\le\omega_0 \nonumber \\ K_e(\omega)&\omega\le \bar a+G(\omega_0). 
	\end{cases}
	\end{equation}
	The susceptible class is now described by the system
	\begin{eqnarray*}
		\frac{d S(t)}{dt}&=& r- \mu_1 S(t)-  S(t)F(t)\nonumber\\
		F(t)&=&\int_{0}^{\omega_0}A(\omega) S(t-\omega)F(t-\omega)d\omega 
	\end{eqnarray*}
	From the results of \citet{meehan2019global}, the endemic equilibrium of the above system has been shown to be globally asymptotically stable when $\mathcal{R}_0>1$.\qed
\end{proof}
\citet{brauer2013dynamics} also examines global stability of systems similar to (\ref{3.14}) in the case $g(\omega)=1$. Although the case of waning immunity has not been considered, analysis of models with waning immunity can be viewed in \citet{barbarossa2015immuno,nakata2014stability}.
\section{Discussion}\label{sec:4}
In this paper, we have developed an immuno-epidemiological model that links the within-host and between-host dynamics of cholera. We have introduced the first attempt, to the best of our knowledge, to structure the epidemic model of the disease using the immune status, which is a function derived from our within-host system.\\ The immunological model follows the fate of a single infected individual where we distinguish the pathogen dynamics from the dynamics of the immune response using timescales. Furthermore, we express the incidence rate to be quadratic in P to emphasize that higher pathogen densities are required in the growth of the pathogen. Using time scale methods we have conducted a thorough analysis of our model. The result of the bifurcation analysis reveals the necessary conditions for the occurrence of a saddle-node and Hopf bifurcation. There's also a possibility of the occurrence of a homoclinic bifurcation that would eliminate the periodic orbits. Subsequently, we have found that a minimum pathogen load is required to activate an immune response. Unlike other within-host cholera models, our modelling approach allows for the possibility of recovery, through the clearance of the pathogen, after a finite period.\\
We use a size-structured model to represent our between-host dynamics. The immune status is the structuring variable, which is an important aspect in terms of its role in the contraction of the disease.  We further linked the two models using the pathogen load and considered the direct and indirect transmission pathways of the disease. We derived the reproduction number and established the conditions for the stability of the DFE. We found the basic reproduction number to be dependent on both direct and indirect transmission pathways of the disease. This emphasizes the need for control measures that target the reduction of transmission by both routes. For the DFE, the disease will be eradicated if $\mathcal{R}_0<1$ and persist otherwise. We showed that a unique endemic equilibrium exists when $\mathcal{R}_0>1$. Without loss of immunity, the endemic equilibrium was both locally and globally asymptotically stable.  \\ Although we have provided a new framework for modelling the dynamics of the disease, our model also has several limitations. Stability analysis of the endemic equilibrium focuses on the case of permanent immunity, therefore,  neglecting the effects of waning immunity. The explicit linkage of the two systems is still inadequate in terms of embedding the within-host dynamics to the population dynamics of the disease. 
In our future work, we intend to provide better ways of connecting the within-host dynamics to the population dynamics of the disease by formulating an integrated model from which we can derive both our within-host and between-host dynamical systems.

\begin{acknowledgements}
The author thanks Johannes M{\"u}ller for his helpful discussions.
\end{acknowledgements}

 \section*{Conflict of interest}
None

\bibliographystyle{spbasic}      
\bibliography{references}   


\end{document}